\documentclass[12pt]{article}
\usepackage{graphicx}
\usepackage{graphicx}
\usepackage{color}
\usepackage{amsfonts}
\usepackage{amssymb}
\usepackage{amsthm}
\usepackage[T1]{fontenc}% style qui permet d'écrire avec des accents!!!

\usepackage{newlfont}
\usepackage{amsmath}
\usepackage{t1enc}

\usepackage[english]{babel}
% ----------------------------------------------------------------
\vfuzz2pt % Don't report over-full v-boxes if over-edge is small
\hfuzz2pt % Don't report over-full h-boxes if over-edge is small
% THEOREMS -------------------------------------------------------
\newtheorem{thm}{Theorem}[section]

\newtheorem{lem}[thm]{Lemma}
\newtheorem{prop}[thm]{Proposition}

\theoremstyle{definition}
\newtheorem{defn}[thm]{Definition}
\theoremstyle{remark} \numberwithin{equation}{section}
\def\proof{\noindent{\bf Proof }}
% MATH -----------------------------------------------------------

%\let \m = \medskip
{}

%\let \r = \rho
%\let \h = \hskip

% ---------------------------------------------------------------
\usepackage{amssymb}
\usepackage{relsize}
\usepackage{layout}
\pagestyle{plain} \topmargin = -1.5cm \oddsidemargin=-0.5cm
\textwidth = 16.5cm \textheight = 23cm \footskip=1.2cm
\evensidemargin=0.8cm \sloppy
\definecolor{lightgray}{gray}{0.5}
\setlength{\parindent}{0pt}

\title{$\mathbb{D}$-solutions of BSDEs with Poisson jumps}

\date{ }
\begin{document}
\author{ I. Hassairi$^*$  }

\maketitle { $^*$ Facult\'e des Sciences de Sfax, département de mathématiques, Université de Sfax, Tunisie.\\ 
e-mail: imen.hassairi@yahoo.fr}

\begin{abstract} In this paper, we study backward stochastic differential equations (BSDEs shortly) with jumps that have Lipschitz generator in a general filtration supporting a Brownian motion and an independent Poisson random measure. Under just integrability on the data we show that such equations admits a unique solution which belongs to class $\mathbb{D}$.
\end{abstract}
\vspace{5mm}

\noindent {\bf Key Words :} : Backward SDEs, Poisson point process, Lipschitz generator, $\mathbb{L}^p$-solution.

\vspace{5mm}
\section{Introduction}

The notion of non-linear BSDEs was introduced by Pardoux and Peng (\cite{PP}, $1990$). These equations have been well studied because they are connected with a lot of applications especially in mathematical finance, stochastic control, partial differential equations, ...\\

Tang and Li \cite{LT} added into the BSDE a jump term that is driven by a Poisson random measure independent of the Brownian motion. The authors obtained the existence and uniqueness of a solution to such an equation when the terminal condition is square integrable and the generator is Lipschitz continuous w.r.t. the variables. Since then, a lot of papers (one can see \cite{KP,P,QS}) studied BSDEs with jumps due to the connections of this subject with mathematical finance and stochastic control.\\

Later, Situ Rong \cite{R} proved an existence result when the terminal time is a bounded random stopping time and the coefficient is non-Lipschitzian.\\

Recently, Song Yao analyzes in his work \cite{Y} the BSDEs with jumps with unbounded random time horizon and under a non-Lipschitz generator condition. He showed the existence and uniqueness of an $\mathbb{L}^p$-solution when the terminal condition is $p$-integrable, for any $p\in(1,\infty)$. For a given $V\in\mathcal{L}^2$, unlike the Brownian stochastic integrals case, the Burkholder-Davis-Gundy inequality is not applicable. So in his paper, he genralized the Poisson stochastic integral for a random field $V\in\mathcal{L}^p$.\\
 
In our paper, we investigate the existence and uniqueness of $\mathbb{D}$-solution for BSDEs when the noise is driven by a Brownian motion and an independent random Poisson measure. This paper generalizes the results of Briand et al. \cite{BDHPS} and Song Yao \cite{Y}. We suppose that $f$ is Lipschitz. Concerning the data, we assume that an integrability condition is hold.\\
 
 Our motivation for writing this paper is because it is the first step in the study of our forthcoming work on the existence and uniqueness of $\mathbb{D}$-solutions for reflected BSDEs with jumps. Finally, to our knowledge there is no such result in the literature.\\
  
The outline of this article is as follows: the following section contains all the notations and useful assumptions for the rest of the paper, in section 3, we showed uniqueness result and the essential estimates. Section 4 is devoted to the case where the data are in $\mathbb{L}^p$, for $p\in(1,2)$ then we treat the case $p=1$ and we showed the desired existence result.

%The title of your section 2
\section{Notations and assumptions}
Let $(\Omega,\mathcal{F},(\mathcal{F}_t)_{t\leq T} )$ be a stochastic basis such that $(\mathcal{F}_t)_{t\leq T}$ is right continuous increasing family of complete sub $\sigma$-algebras of $\mathcal{F}$ and $(\mathcal{F}_0$ contains $\mathcal{N}$ the set of all $\mathbb{P}$-null sets of $\mathcal{F}$, $\mathcal{F}_{t^+}=\cap_{\epsilon>0}\mathcal{F}_{t+\epsilon}=\mathcal{F}_{t}$, $\forall t\leq T$. We assume that $(\mathcal{F}_t)_{t\leq T}$ is supported by the two mutually independent processes:\\

(i) $B=(B_t)_{0\leq t\leq T}$ be a standard $d$-dimensional Brownian motion.\\

(ii) let $\mathfrak{p}$ be an $U$-valued Poisson point process on $(\Omega,\mathcal{F},\mathbb{P}$, for some finite random Poisson measure $\mu$ on $\mathbb{R}^+\times U$ where $U\subset \mathbb{R}^m\setminus\{0\}$, the counting measure $\mu(dt,de)$ of $\mathfrak{p}$ on $[0,T]\times U$ has the compensator $\mathbb{E}[\mu(dt,de)]=\lambda(de)dt$. The corresponding compensated Poisson random measure $\tilde{\mu}(dt,de):=\mu(dt,de)-dt\lambda(de)$ is a martingale w.r.t. $\mathcal{F}$. The measure $\lambda$  is $\sigma$-finite on $U$ satisfying

\begin{equation}\nonumber
\int_{ U}(1\wedge|e|^2)\lambda(de)<+\infty.
\end{equation}

In this paper, let $\tilde{\mathcal{P}}$ denote the $\sigma$-algebra of $\mathcal{F}_t$-predictable sets on $\Omega\times [0,T]$. In addition, we assume that 
\begin{equation}\nonumber
\mathcal{F}_t=\sigma[\int_{A\times[0,s]}\mu(ds,de),s\leq t, A\in U] \cup \sigma[B_s, s\leq t] \cup \mathcal{N}.
\end{equation}
 
 For a given adapted càdlàg process $(X_t)_{t\leq T}$ and for any $t\leq T$ we set $X_{t^-}=\lim_{s\nearrow t} X_s$ with the convention that $X_{0^-}=X_0$ and $\Delta X_t=X_t-X_{t^-}$. For any scenario $\omega\in\Omega$, let $D_{\mathfrak{p}(\omega)}$ collect all jump times of the path $\mathfrak{p}(\omega)$ which is a countable subset of $(0,T]$.\\
 
 Let us introduce the following spaces of processes and notations considered in this work, for all $p> 0$,\\
 
 \begin{itemize}
 \item  $\mathcal{S}^p$ is the space of $\mathbb{R}$-valued, $\mathcal{F}_t$-adapted and càdlàg processes $(X_t)_{t\in[0,T]}$ such that
$$\left\|X\right\|_{\mathcal{S}^p}=\mathbb{E}\bigg[\sup_{t\leq T}\left|X_t\right|^p\bigg]^{  \frac{1}{p}}<+\infty.$$
If $p\geq 1$, $\|.\|_{\mathcal{S}^p}$ is a norm on $\mathcal{S}^p$ and if $p\in (0,1)$, $(X,X')\mapsto \left\|X-X'\right\|_{\mathcal{S}^p}$ is a distance on $\mathcal{S}^p$. Under this metric, $\mathcal{S}^p$ is complete.\\

\item $\mathcal{M}^p$ denotes the set of $\mathbb{R}^n$-valued and $\mathcal{F}_t$-predictably measurable processes $(X_t)_{t\in[0,T]}$   such that 
$$\left\|X\right\|_{\mathcal{M}^p}=\mathbb{E}\bigg[ \bigg(\int_{0}^{T}\left|X_s\right|^2 ds\bigg)^{p/2}\bigg]^{ \frac{1}{p}}<+\infty.$$

For $p\geq 1$, $\mathcal{M}^p$ is a Banach space endowed with this norm and for $p \in (0,1)$, $\mathcal{M}^p$ is a complete metric space with the resulting distance. For all $\beta\in(0,1]$ let us define $\mathcal{M}^\beta$ as the set of $\mathcal{F}_t$-progressively measurable processes $(X_t)_{t\in[0,T]}$ with values in $\mathbb{R}^d$ such that $$\left\|X\right\|_{\mathcal{M}^\beta}=\mathbb{E}\bigg[\bigg(\int_{0}^{T}\left|X_s\right|^2 ds\bigg)^{\beta/2}\bigg] <+\infty.$$
 
We denote by $\mathcal{M}^0$ the set of $\mathcal{P}$-measurable processes $Z:=(Z_t)_{t\leq T}$ with values in $\mathbb{R}^d$ such that $\int_0^T|Z_s(\omega)|^2ds<\infty$, $\mathbb{P}-a.s.$.\\

\item $\mathcal{L}^p_{\text{loc}}$ is the space of all $\mathcal{\tilde{P}}\otimes\mathcal{B}(U)$-measurable mappings $V:\Omega\times [0,T]\times U\rightarrow \mathbb{R}$ such that $\int_0^T \int_{U} |V_s(e)|^p\lambda(de)ds<\infty$. Let $\mathcal{L}^p$ be the set of all $V\in\mathcal{L}^p_{\text{loc}}$ such that $\Vert V\Vert_{\mathcal{L}^p}:= \bigg(\mathbb{E}\int_0^T\int_U|V_s(e)|^p\lambda(de)ds\bigg)^\frac{1}{p}<+\infty$.
 
 \end{itemize}
The stochastic integral with respect to the compensated Poisson random measure $\tilde{\mu}(dt,de)$ is usually defined for locally square integrable random mappings $V\in\mathcal{L}^2_{\text{loc}}$. We recall, in the following lemma, a generalization of Poisson stochastic integrals for random mappings in $\mathcal{L}^p_{\text{loc}}$ for $p\in[1,2)$. For more details on the proof we refer the reader to Lemma 1.1 in \cite{Y}.

\begin{lem}\label{Y1}
  Let $p\in[1,2)$, we assign $M$ as the Poisson stochastic integral  
\begin{eqnarray}\label{Y}
\int_{[0,T]}\int_U V_s(e)\tilde{\mu}(ds,de),
\end{eqnarray}  
  
for any $V\in\mathcal{L}^p$. Analogous to the classic extension of Poisson stochastic integrals from $\mathcal{L}^2$ to $\mathcal{L}^2_{\text{loc}}$, one can define the stochastic integral (\ref{Y}) for any $V\in\mathcal{L}^p_{\text{loc}}$, which is a càdlàg local martingale with quadratic variation $$[M,M]_t=\int_0^t\int_U |V_s(e)|^2\mu(ds,de)$$
and whose jump process satisfies $\Delta M_t(\omega)=\mathbf{1}_{t\in D_{\mathfrak{p}(\omega)}}V(\omega,t,\mathfrak{p}(\omega))$, $\forall t\in(0,T]$. This generalized Poisson stochastic integral is still linear in $V\in\mathcal{L}^p_{\text{loc}}$.
\end{lem}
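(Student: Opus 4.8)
\proof (Plan.) The plan is to imitate, level by level, the classical construction of the Poisson stochastic integral --- bounded integrands, then $\mathcal{L}^2$, then $\mathcal{L}^2_{\text{loc}}$ --- replacing the $L^2$ isometry by an $L^p$ maximal estimate that survives for $p\in[1,2)$. The basic tool is the following inequality: if $V$ is bounded and supported in $\{|e|\ge\delta\}$ for some $\delta>0$, then $V\in\mathcal{L}^2$, the integral $M:=\int_{[0,\cdot]}\int_U V_s(e)\tilde\mu(ds,de)$ is classically defined, is a c\`adl\`ag martingale with only finitely many jumps on $[0,T]$ --- in particular purely discontinuous --- and satisfies $[M,M]_t=\int_0^t\int_U|V_s(e)|^2\mu(ds,de)=\sum_{s\le t}(\Delta M_s)^2$ with $\Delta M_t=\mathbf 1_{\{t\in D_{\mathfrak p(\omega)}\}}V(\omega,t,\mathfrak p(\omega))$. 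Because $\mu$ is a sum of Dirac masses and $p/2<1$, subadditivity of $x\mapsto x^{p/2}$ gives $[M,M]_T^{p/2}\le\int_0^T\int_U|V_s(e)|^p\mu(ds,de)$, so the Burkholder--Davis--Gundy inequality yields
\begin{equation}\nonumber
\mathbb{E}\Big[\sup_{t\le T}|M_t|^p\Big]\le C_p\,\mathbb{E}\int_0^T\int_U|V_s(e)|^p\mu(ds,de)=C_p\,\|V\|_{\mathcal{L}^p}^p .
\end{equation}

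Such integrands are dense in $\mathcal{L}^p$ (truncate $V$ in amplitude and cut off $\{|e|<1/n\}$, using $\int_U(1\wedge|e|^2)\lambda(de)<\infty$ to see that $\lambda(\{|e|\ge1/n\})<\infty$). So for a general $V\in\mathcal{L}^p$ I would choose approximating $V^n$ with $\|V^n-V\|_{\mathcal{L}^p}\to0$; by the displayed estimate $(M^n)$ is Cauchy in $\mathcal{S}^p$, hence converges there to a process $M$, which, being a uniform $L^p$-limit of martingales, is itself a martingale. Setting $\int V\,d\tilde\mu:=M$ is unambiguous (the estimate again) and clearly linear in $V$.

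For the quadratic variation and the jump process on $\mathcal{L}^p$ I would pass to the limit in the identities of the first paragraph. Along a subsequence, $\|V^n-V\|_{\mathcal{L}^p}\to0$ forces $\sum_{s\in D_{\mathfrak p(\omega)},\,s\le T}\big|V^n(\omega,s,\mathfrak p(\omega))-V(\omega,s,\mathfrak p(\omega))\big|^p\to0$ for $\mathbb{P}$-a.e.\ $\omega$, while convergence in $\mathcal{S}^p$ gives $\sup_{t\le T}|\Delta M^n_t-\Delta M_t|\le2\sup_{t\le T}|M^n_t-M_t|\to0$ in $L^p$. Matching the two limits identifies $\Delta M_t=\mathbf 1_{\{t\in D_{\mathfrak p(\omega)}\}}V(\omega,t,\mathfrak p(\omega))$, shows that $M$ is still purely discontinuous, and hence $[M,M]_t=\sum_{s\le t}(\Delta M_s)^2=\int_0^t\int_U|V_s(e)|^2\mu(ds,de)$.

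Finally, for $V\in\mathcal{L}^p_{\text{loc}}$ I would localize: put $\tau_n=\inf\{t\le T:\int_0^t\int_U|V_s(e)|^p\lambda(de)\,ds\ge n\}\wedge T$, which are stopping times with $\tau_n\uparrow T$ and $\tau_n=T$ eventually, $\mathbb{P}$-a.s. Then $V\mathbf 1_{[0,\tau_n]}\in\mathcal{L}^p$, so the previous steps produce $M^n:=\int V\mathbf 1_{[0,\tau_n]}\,d\tilde\mu$; linearity shows $M^{n+1}$ and $M^n$ agree on $[0,\tau_n]$, so $M_t:=M^n_t$ on $\{t\le\tau_n\}$ is a well-defined c\`adl\`ag process with $M^{\tau_n}=M^n$ a martingale, i.e.\ a local martingale localized by $(\tau_n)$; the quadratic-variation and jump identities, being stable under $t\mapsto t\wedge\tau_n$, pass to the limit, and linearity follows by using a common localizing sequence $\tau_n\wedge\tau_n'$. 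The one genuinely delicate step is the limit passage of the third paragraph: convergence in $\mathcal{S}^p$ does not by itself transfer quadratic variations, so one must really use the $\mathcal{L}^p$-convergence of the integrands to control the jumps pathwise and to be sure the limiting martingale has no continuous part --- everything else is bookkeeping around the estimate of the first paragraph.
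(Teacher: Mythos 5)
The paper gives no proof of this lemma at all --- it simply refers to Lemma 1.1 of Yao \cite{Y} --- and your construction (the BDG inequality combined with subadditivity of $x\mapsto x^{p/2}$ over the atoms of $\mu$, density in $\mathcal{L}^p$ of bounded integrands supported in $\{|e|\ge\delta\}$, completeness of $\mathcal{S}^p$, identification of the jumps along an a.s.\ subsequence, then localization) is in substance exactly the argument of that reference, and it is correct. The one step you flag as delicate, ruling out a continuous martingale part of the limit $M$, closes with the reverse BDG inequality: $\mathbb{E}\big[[M^n-M,M^n-M]_T^{p/2}\big]\le C_p\,\mathbb{E}\big[\sup_{t\le T}|M^n_t-M_t|^p\big]\to 0$ while $[M^n-M,M^n-M]\ge[M^c,M^c]$ because $(M^n)^c=0$, so $M^c=0$ and the quadratic variation is the pure sum of squared jumps as claimed.
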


 The above lemma will be useful and plays a crucial role in the rest of the paper.\\

Let us recall that a process $X$ belongs to the class $\mathbb{D}$ if the family of  random variables $\{X_\tau,\,\tau\in \mathcal{T} \}$ is uniformly integrable with $\mathcal{T}$ is the set of all $\mathcal{F}_t$-stopping times $\tau \in [0,T],\,\mathbb{P}-a.s.$ We say that a sequence $(\tau_k)_{k\in\mathbb{N}}\subset T$ is stationary if $\mathbb{P}(\liminf_{k\rightarrow\infty}\lbrace\tau_k=T\rbrace)=1$. In (\cite{DM2}, pp.$90$) it is observed that the space of continuous (càdlàg), adapted processes from class $\mathbb{D}$ is complete under the norm $$\left\|X\right\|_\mathbb{D}=\sup_{\tau \in \mathcal{T}}\mathbb{E}[\left|X_\tau\right|].$$

In this paper, we consider the following assumptions:\\

(A1) A terminal value $\xi$ which is an $\mathbb{R}$-valued, $\mathcal{F}_T$-measurable random variable such that $\mathbb{E}|\xi|]<\infty$ ; \\

(A2) A random function $f:[0,T]\times\Omega\times\mathbb{R}\times\mathbb{R}^d\times\mathcal{L}  \rightarrow \mathbb{R}$ which with $(t,\omega,y,z,v)$ associates $f(t,\omega,y,z,v)$ and which is $\tilde{\mathcal{P}}\otimes\mathcal{B}(\mathbb{R}^{1+d})\otimes\mathcal{B}(\mathcal{L})$-measurable. In addition we assume: \\

(i) the process $(f(t,0,0,0))_{t\leq T}$ is $d\mathbb{P}\otimes dt$-integrable, i.e., $\mathbb{E}\bigg[\int_0^T|f(s,0,0,0)|ds\bigg]<\infty$;\\

(ii) $f$ is uniformly Lipschitz in $(y,z,v)$, i.e., there exists a constant $\kappa\geq 0$ such that for any $t\in[0,T]$, $y,y'\in\mathbb{R}$, $z,z'\in\mathbb{R}^d$ and $v, v'\in\mathcal{L}^p$ we have
$$\mathbb{P}-a.s,\ \left|f(t,\omega,y,z,v)-f(t,\omega,y',z',v')\right|\leq \kappa(\left|y-y'\right|+\left|z-z'\right|+\parallel v-v' \parallel);$$

(iii) $\mathbb{P}-a.s., \forall r>0,\ \int_0^T \psi_r(s)ds<+\infty$, where $\psi_r(t):=\sup_{\left|y\right|\leq r} \left|f(t,y,0,0)-f(t,0,0,0)\right|$.\\

(iv) There exist two constants $\gamma\geq 0$, $\alpha\in(0,1)$ and a non-negative progressively measurable process $g$ such that $\mathbb{E}[\int_0^T g_s ds]<\infty$ and 
\begin{center}$|f(t,y,z,v)-f(t,y,0,0)|\leq \gamma (g_t+|y|+|z|+\Vert v\Vert_{\mathcal{L}^p})^\alpha,t\in[0,T],y\in\mathbb{R}\ and\ z\in\mathbb{R}^d.$
\end{center}
Note that if $f$ does not depend on $z$ and $v$, the latter assumption is satisfied.\\

To begin with, let us now introduce the notion of $\mathbb{D}$-solutions of BSDEs with jumps which we consider throughout this paper.
\begin{defn} A triplet of processes $(Y,Z,V):=(Y_t,Z_t,V_t)_{t\leq T}$ with values in $\mathbb{R}^{1+d}\times\mathcal{L}^1$ is called a solution of the BSDE with jumps associated with $(f,\xi)$ if the following holds:  
\begin{eqnarray}\label{s2}\left\{
\begin{array}{l}
    Y\in\mathbb{D}, Z\in \mathcal{M}^0 \mbox{ and } V\in\mathcal{L}^1; \\\\
    		
    Y_t=\xi+\int_{t}^{T}f(s,Y_s,Z_s,V_s)ds -\int_{t}^{T}Z_s dB_s-\int_t^T \int_U V_s(e) \tilde{\mu}(ds,de), t\leq T;\\\\

\end{array}
\right.
\end{eqnarray}

The solution has jumps which arise naturally since the noise contains a random Poisson measure part.\\
\end{defn}
We are now going to prove the uniqueness of the solution for the (\ref{s2}) under the above assumptions on $f$ and $\xi$.

\section{Uniqueness and existence of a solution}

\subsection{A priori estimates}
\begin{lem} \label{L}
Let $(Y,Z,V)$ be a solution to BSDE (\ref{s2}) and assume that for $p>0$, $(\int_0^T f(s,0,0,0)ds)^p$ is integrable. If $Y\in\mathcal{S}^p$ then $Z\in\mathcal{M}^p$ , $ V\in\mathcal{L}$ and there exists a constant $C_{p,\kappa}$ such that ,
\begin{eqnarray}\label{L1}
   \mathbb{E}\bigg[\bigg(\int_0^T|Z_s|^2ds\bigg)^{\frac{p}{2}}+ \int_0^T\int_U |V_s(e)|^p\lambda(de)ds \bigg]  \leq C_{p,\kappa} \mathbb{E}\bigg[\sup_{t}|Y_t|^p+\bigg(\int_0^T |f(s,0,0,0)| ds\bigg)^p\bigg].
 \end{eqnarray}
\end{lem}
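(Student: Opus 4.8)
The plan is to adapt the now-classical scheme for $\mathbb{L}^p$ a priori estimates of Briand et al.\ \cite{BDHPS} to the Poisson setting, replacing the Burkholder--Davis--Gundy inequality by Lemma~\ref{Y1} wherever the former would be applied to the compensated Poisson integral. I first reduce to a bounded situation. Since $Z\in\mathcal{M}^0$ and $\int_0^T\int_U|V_s(e)|^2\mu(ds,de)=\sum_{s\le T}|\Delta Y_s|^2<+\infty$ $\mathbb{P}$-a.s., the stopping times $\tau_n:=\inf\{t\le T:\ \int_0^t|Z_s|^2ds+\int_0^t\int_U|V_s(e)|^2\mu(ds,de)\ge n\}\wedge T$ increase to $T$ $\mathbb{P}$-a.s.\ and on $[0,\tau_n]$ all the stochastic integrals below are genuine martingales; I prove (\ref{L1}) with $T$ replaced by $\tau_n$ and a constant independent of $n$, then let $n\to\infty$ via Fatou's lemma on the left, the right-hand side being finite by hypothesis and independent of $n$.

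Next I apply Itô's formula. For $0<p<2$, to circumvent the lack of regularity of $x\mapsto|x|^p$ at the origin, I apply it to $u_\varepsilon(Y_t):=(\varepsilon+|Y_t|^2)^{p/2}$ with $\varepsilon>0$, and let $\varepsilon\downarrow 0$ at the end (for $p\ge 2$ one differentiates $|Y_t|^p$ directly). Since $u_\varepsilon$ is convex when $p\ge 1$, the jump compensator in the formula is nonnegative, and taking expectations on $[0,\tau_n]$ yields
$$\begin{aligned}
\mathbb{E}[u_\varepsilon(Y_0)] &+\tfrac{p(p-1)}{2}\,\mathbb{E}\int_0^{\tau_n}(\varepsilon+|Y_s|^2)^{\frac p2-1}|Z_s|^2\,ds+\mathbb{E}\int_0^{\tau_n}\!\!\int_U D_\varepsilon(Y_{s^-},V_s(e))\,\lambda(de)\,ds\\
&\le \mathbb{E}[u_\varepsilon(Y_{\tau_n})]+\mathbb{E}\int_0^{\tau_n}|u_\varepsilon'(Y_s)|\,|f(s,Y_s,Z_s,V_s)|\,ds,
\end{aligned}$$
where $D_\varepsilon(y,v):=u_\varepsilon(y+v)-u_\varepsilon(y)-u_\varepsilon'(y)v\ge 0$. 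Two elementary inequalities drive the estimate: $u_\varepsilon''(x)\ge p(p-1)(\varepsilon+|x|^2)^{p/2-1}$, which produces the $|Z|^2$ term, and, after letting $\varepsilon\downarrow 0$ and distinguishing the cases $|v|\ge|y|$ and $|v|<|y|$, a lower bound $D_0(y,v)\ge c_p(|v|^p\mathbf{1}_{\{|v|\ge|y|\}}+|y|^{p-2}|v|^2\mathbf{1}_{\{|v|<|y|\}})$; combined with a further Young inequality on the second set, this is what eventually yields $\int_0^T\int_U|V_s(e)|^p\lambda(de)\,ds$ on the left.

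It then remains to bound the right-hand side and absorb. By (A2)(ii), $|f(s,Y_s,Z_s,V_s)|\le|f(s,0,0,0)|+\kappa(|Y_s|+|Z_s|+\|V_s\|_{\mathcal{L}^p})$, and $|u_\varepsilon'(Y_s)|\le p(\varepsilon+|Y_s|^2)^{(p-1)/2}$, which tends to $p|Y_s|^{p-1}$; bounding $\mathbb{E}[u_\varepsilon(Y_{\tau_n})]\le\mathbb{E}[(\varepsilon+\sup_t|Y_t|^2)^{p/2}]$, using $\int_0^{\tau_n}|Y_s|\,ds\le T\sup_t|Y_t|$ and $\|V_s\|_{\mathcal{L}^p}^p=\int_U|V_s(e)|^p\lambda(de)$, and applying Hölder and Young's inequalities, each term carrying $|Y|^{p-1}|Z|$ or $\|V_s\|_{\mathcal{L}^p}$ splits into a small multiple of $\int_0^{\tau_n}|Y_s|^{p-2}|Z_s|^2\,ds$, respectively of $\int_0^{\tau_n}\int_U|V_s(e)|^p\lambda(de)\,ds$, which is absorbed on the left, plus a multiple of $\mathbb{E}[\sup_t|Y_t|^p+(\int_0^T|f(s,0,0,0)|\,ds)^p]$. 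This gives, uniformly in $n$ and $\varepsilon$,
$$\mathbb{E}\int_0^{\tau_n}|Y_s|^{p-2}|Z_s|^2\,ds+\mathbb{E}\int_0^{\tau_n}\!\!\int_U|V_s(e)|^p\lambda(de)\,ds\le C_{p,\kappa}\,\mathbb{E}\Big[\sup_{t\le T}|Y_t|^p+\Big(\int_0^T|f(s,0,0,0)|\,ds\Big)^p\Big],$$
and the passage to $\mathbb{E}(\int_0^{\tau_n}|Z_s|^2ds)^{p/2}$ follows from $\int_0^{\tau_n}|Z_s|^2ds\le(\sup_s|Y_s|)^{2-p}\int_0^{\tau_n}|Y_s|^{p-2}|Z_s|^2ds$ and one more Young inequality with conjugate exponents $\tfrac{2}{2-p}$ and $\tfrac{2}{p}$. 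Letting $n\to\infty$ yields (\ref{L1}); in particular $Z\in\mathcal{M}^p$ and $V\in\mathcal{L}^p$.

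The delicate point is the jump contribution when $0<p<2$: Burkholder--Davis--Gundy is unavailable for $\int_0^{\cdot}\int_U V_s(e)\tilde{\mu}(ds,de)$ — precisely the gap bridged by Lemma~\ref{Y1} — and Itô's formula for $|Y|^2$ only delivers the $\mathcal{L}^2$-type quantity $\int_0^T\int_U|V_s(e)|^2\mu(ds,de)$, out of which the $\mathcal{L}^p$-norm of $V$ cannot be extracted for $p<2$. What makes the argument work is that Itô's formula for $(\varepsilon+|Y|^2)^{p/2}$ generates the truly $p$-homogeneous compensator term $\int_0^{\tau_n}\int_U D_\varepsilon(Y_{s^-},V_s(e))\lambda(de)ds$; carrying out the $|v|\ge|y|$ versus $|v|<|y|$ decomposition together with the subsequent Young-type absorption, and verifying that all constants remain independent of $n$ and $\varepsilon$, is the technical heart of the proof.
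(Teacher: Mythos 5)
Your argument is genuinely different from the one in the paper. The paper applies It\^o's formula to $|Y|^2$ on $[t\wedge\tau_n,\tau_n]$, raises the resulting inequality to the power $p/2$, controls the two martingale increments by BDG (using Lemma~\ref{Y1} to make sense of the Poisson one), and then passes from $\big(\int\!\!\int|V_s(e)|^2\mu(ds,de)\big)^{p/2}$ to $\int\!\!\int|V_s(e)|^p\lambda(de)ds$ via the comparison (\ref{Y2}) borrowed from \cite{Y}. You instead apply It\^o's formula directly to the smoothed $p$-th power $(\varepsilon+|Y|^2)^{p/2}$, harvest $\int|Y_s|^{p-2}|Z_s|^2ds$ from the second-order term and the $\mathcal{L}^p$-norm of $V$ from the jump compensator, and recover $\big(\int|Z_s|^2ds\big)^{p/2}$ at the end through $\int|Z_s|^2ds\le(\sup_s|Y_s|)^{2-p}\int|Y_s|^{p-2}|Z_s|^2ds$ and Young. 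This is the strategy of Proposition 3.2 in \cite{BDHPS} and of \cite{KP,Y}, and it is in fact the paper's own strategy for Proposition~\ref{P}. What it buys you is that the $\mathcal{L}^p$-quantity $\int\!\!\int|V_s(e)|^p\lambda(de)ds$ is produced directly on the left-hand side, whereas the paper's route only yields $\mathbb{E}\big[\big(\int\!\!\int|V|^2\mu\big)^{p/2}\big]$ and then needs (\ref{Y2}); note that (\ref{Y2}) as stated bounds that quantity \emph{above} by $\mathbb{E}\int\!\!\int|V|^p\lambda\,ds$, which is not the direction required to place the $\mathcal{L}^p$-norm of $V$ on the left of (\ref{L1}), so your route is arguably the more robust of the two on this point.

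Two steps in your sketch need attention. First, the extraction of $\int\!\!\int|V_s(e)|^p\lambda(de)ds$ from the compensator lower bound $D_0(y,v)\ge c_p\big(|v|^p\mathbf{1}_{\{|v|\ge|y|\}}+|y|^{p-2}|v|^2\mathbf{1}_{\{|v|<|y|\}}\big)$ is the crux, and ``a further Young inequality on the second set'' does not close it as stated: on $\{|v|<|y|\}$ the natural interpolation $|v|^p\le\frac{p}{2}|y|^{p-2}|v|^2+\big(1-\frac{p}{2}\big)|y|^p$ leaves you with the term $|Y_{s^-}|^p\,\lambda\big(\{e:|V_s(e)|<|Y_{s^-}|\}\big)$, which is only controlled by $\sup_t|Y_t|^p$ when $\lambda(U)<\infty$; for a merely $\sigma$-finite $\lambda$ (which the standing assumption $\int_U(1\wedge|e|^2)\lambda(de)<\infty$ permits) this step requires a genuinely different argument, and you should either assume $\lambda$ finite or supply the missing estimate. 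Second, your argument relies on the convexity of $u_\varepsilon$ and on the positivity of the constant $c_p$ (of order $p(p-1)$), so it only covers $p>1$, while the lemma is asserted for all $p>0$; the paper's own proof suffers from the same restriction (its Young inequalities use the conjugate exponent $p/(p-1)$), but since Lemma~\ref{L} is later invoked with exponents $\beta\in(0,1)$, you should at least state the restriction explicitly rather than leave it implicit.
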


\proof Since there is a lack of integrability of the processes $(Y,Z,V)$, we will proceed by localization. For each integer $n$, let us define the stopping time 
$$\tau_n=\inf\{t\geq 0;\int_0^t|Z_s|^2 ds+\int_0^t\int_{U}|V_s(e)|^p   \lambda(de)ds> n\}\wedge T.$$

The sequence $(\tau_n)_{n\geq 0}$ is non-decreasing and converges to $T$. Using It\^o's formula with $|Y|^2$ on $[t\wedge\tau_n,\tau_n]$ we obtain,

\begin{eqnarray}\label{I4}
\begin{split}
& |Y_{t\wedge\tau_n}|^2+\int_{t\wedge\tau_n}^{\tau_n}|Z_s|^2 ds+\int_{t\wedge\tau_n}^{\tau_n}\int_{U} |V_s(e)|^2 \mu(ds,de)  = |Y_{ \tau_n} |^2 +2 \int_{t\wedge\tau_n}^{\tau_n}Y_s f(s,Y_s,Z_s,V_s) ds\\
&\qquad -2\int_{t\wedge\tau_n}^{\tau_n}Y_{s} Z_s dB_s -2\int_{t\wedge\tau_n}^{\tau_n}\int_{U}Y_{s^-} V_s(e) \tilde{\mu}(ds,de) .
\end{split}
\end{eqnarray}

But from the assumption on $f$, Young's inequality (for $\epsilon>0,\ ab\leq\frac{a^p}{p\epsilon^p}+\frac{\epsilon^q b^q}{q}$, with $\frac{1}{p}+\frac{1}{q}=1$) and the inequality ($2ab\leq 2a^2+\frac{b^2}{2}$) where $a,b\in\mathbb{R}$, we have

 $$\begin{array}{ll}
 \begin{split}
&  2  Y_s f(s,Y_s,Z_s,V_s)\leq 2\kappa |Y_s|^2+2\kappa |Y_s||Z_s|+2\kappa |Y_s| \Vert V_s\Vert_{\mathcal{L}^p}+2|Y_s| f(s,0,0,0)\\
& \qquad \leq 2\kappa(1+\kappa) |Y_s|^2+\frac{1}{2}|Z_s|^2+\frac{p-1}{p}\epsilon^{-\frac{p}{p-1}}(2\kappa)^\frac{p}{p-1}|Y_s|^\frac{p}{p-1}+ \frac{\epsilon^p}{p} \Vert V_s\Vert_{\mathcal{L}^p}^p+2|Y_s|f(s,0,0,0). 
  \end{split}
\end{array}$$

Plugging the above inequality into (\ref{I4}), we get

 $$\begin{array}  {ll}
\begin{split}
&  \frac{1}{2}\int_{t\wedge\tau_n}^{\tau_n}|Z_s|^2 ds+ \int_{t\wedge\tau_n}^{\tau_n}\int_{U} |V_s(e)|^2 \mu(ds,de)\\
&\qquad \leq  2\kappa(1+\kappa)\sup_{s\in[0,T]}|Y_s|^2+ \frac{p-1}{p}\epsilon^{-\frac{p}{p-1}} (2\kappa)^\frac{p}{p-1}\sup_{s\in[0,T]} |Y_s|^\frac{p}{p-1} \\
& \qquad +2\sup_{s\in[0,T]}|Y_s|\int_{t\wedge\tau_n}^{\tau_n}f(s,0,0,0)ds+ \epsilon^p\int_{t\wedge\tau_n}^{\tau_n}\int_{U} |V_s(e)|^p\lambda(de)ds\\
& \qquad +2\bigg|\int_{t\wedge\tau_n}^{\tau_n}Y_{s} Z_s dB_s\bigg|+2\bigg|\int_{t\wedge\tau_n}^{\tau_n}\int_{U}Y_{s^-} V_s(e) \tilde{\mu}(ds,de)\bigg|.
\end{split}
\end{array} $$

Hence, using the inequality ($2ab\leq  a^2+  b^2$), we obtain

 $$\begin{array}  {ll}
\begin{split}
&  \frac{1}{2}\int_{t\wedge\tau_n}^{\tau_n}|Z_s|^2 ds+ \int_{t\wedge\tau_n}^{\tau_n}\int_{U} |V_s(e)|^2 \mu(ds,de)\\
&\qquad \leq (2\kappa(1+\kappa)+1) \sup_{s\in[0,T]}|Y_s|^2+ \frac{p-1}{p}\epsilon^{-\frac{p}{p-1}} (2\kappa)^\frac{p}{p-1}\sup_{s\in[0,T]} |Y_s|^\frac{p}{p-1}\\
& \qquad + \bigg(\int_{t\wedge\tau_n}^{\tau_n}f(s,0,0,0)ds\bigg)^2 + \epsilon^p\int_{t\wedge\tau_n}^{\tau_n}\int_{U} |V_s(e)|^p\lambda(de)ds+2\bigg|\int_{t\wedge\tau_n}^{\tau_n}Y_{s} Z_s dB_s\bigg|\\
& \qquad +2\bigg|\int_{t\wedge\tau_n}^{\tau_n}\int_{U}Y_{s^-} V_s(e) \tilde{\mu}(ds,de)\bigg|.
\end{split}
\end{array} $$

Therefore

  \begin{eqnarray}  \label{e2}
 \begin{split}
& \bigg(\int_{t\wedge\tau_n}^{\tau_n}|Z_s|^2 ds\bigg)^\frac{p}{2}+\bigg( \int_{t\wedge\tau_n}^{\tau_n}\int_{U} |V_s(e)|^2 \mu(ds,de)\bigg)^\frac{p}{2}\\
&\qquad \leq C_{p,\kappa} \sup_{s\in[0,T]}|Y_s|^p+ C_{p,\kappa,\epsilon}\sup_{s\in[0,T]} |Y_s|^\frac{p^2}{2(p-1)}+ \bigg(\int_{t\wedge\tau_n}^{\tau_n}f(s,0,0,0)ds\bigg)^p\\
& \qquad + \epsilon^\frac{p^2}{2}\bigg(\int_{t\wedge\tau_n}^{\tau_n}\int_{U} |V_s(e)|^p\lambda(de)ds\bigg)^\frac{p}{2}+\bigg|\int_{t\wedge\tau_n}^{\tau_n}Y_{s} Z_s dB_s\bigg|^\frac{p}{2}+\bigg|\int_{t\wedge\tau_n}^{\tau_n}\int_{U}Y_{s^-} V_s(e) \tilde{\mu}(ds,de)\bigg|^\frac{p}{2} .
\end{split}
\end{eqnarray} 
 
On the other hand using BDG and Young inequalities we get,

 $$\begin{array}{ll}
   \begin{split} 
& \mathbb{E}\bigg[\bigg|\int_{t\wedge\tau_n}^{\tau_n}Y_{s} Z_s dB_s\bigg|^{p/2}\bigg]\leq c_{1p}\mathbb{E}\bigg[\bigg(\int_0^{\tau_n}|Y_{s}|^2 |Z_s|^2ds\bigg)^{p/4}\bigg]\\
&\qquad\qquad\qquad \leq c_{1p}\mathbb{E}\bigg[\bigg( \sup_{s\in[0,T]}|Y_s|\bigg)^{p/2} \bigg(\int_{t\wedge\tau_n}^{\tau_n}|Z_s|^2ds\bigg)^{p/4}\bigg]\\
&\qquad\qquad\qquad \leq \frac{c_{1p}^2}{2} \mathbb{E}\bigg(\sup_{s\in[0,T]}|Y_s|^p\bigg)+\frac{1}{2} \mathbb{E}\bigg(\int_{t\wedge\tau_n}^{\tau_n}|Z_s|^2ds\bigg)^{p/2}
  \end{split}
\end{array}$$

and similarly for the Poisson stochastic part which is uniformly integrable martingale by Lemma \ref{Y1},

 $$\begin{array}{ll}
 \begin{split} 
& \mathbb{E}\bigg[\bigg|\int_{t\wedge\tau_n}^{\tau_n}\int_{U}Y_{s^-} V_s(e) \tilde{\mu}(ds,de) \bigg|^{p/2}\bigg]\leq c_{2p} \mathbb{E}\bigg[\bigg(\int_{t\wedge\tau_n}^{\tau_n}\int_{U}|Y_{s}|^2| V_s(e)|^2 \mu(ds,de)\bigg)^{p/4}\bigg]\\
&\qquad\qquad\qquad \leq \frac{c_{2p}^2}{2} \mathbb{E}\bigg(\sup_{s\in[0,T]}|Y_s|^p\bigg)+\frac{1}{2} \mathbb{E}\bigg(\int_{t\wedge\tau_n}^{\tau_n} \int_{U}| V_s(e)|^2 \mu(ds,de)\bigg)^{p/2}
\end{split}
\end{array}$$

where $c_{1p}$ and $c_{2p}$ are real constants. Coming back to (\ref{e2}) and then taking expectation, we obtain, 

   \begin{eqnarray}   
 \begin{split}
& \frac{1}{2}\mathbb{E}\bigg[\bigg(\int_{t\wedge\tau_n}^{\tau_n}|Z_s|^2 ds\bigg)^\frac{p}{2}+\bigg( \int_{t\wedge\tau_n}^{\tau_n}\int_{U} |V_s(e)|^2 \mu(ds,de)\bigg)^\frac{p}{2}\bigg]\\
&\qquad \leq C_{p,\kappa,\epsilon} \mathbb{E}\bigg(\sup_{s\in[0,T]}|Y_s|^p\bigg) + \bigg(\int_{t\wedge\tau_n}^{\tau_n}f(s,0,0,0)ds\bigg)^p\\
& \qquad + \epsilon^\frac{p^2}{2}\bigg(\int_{t\wedge\tau_n}^{\tau_n}\int_{U} |V_s(e)|^p\lambda(de)ds\bigg)^\frac{p}{2}.
\end{split}
\end{eqnarray} 

By the equations (5.1) and (5.2) in \cite{Y}, we have

\begin{eqnarray}\label{Y2}
\mathbb{E}\bigg[\bigg(\int_{t\wedge\tau_n}^{\tau_n}\int_{U} |V_s(e)|^2 \mu(ds,de)\bigg)^\frac{p}{2}\bigg]\leq \mathbb{E}\bigg[ \int_{t\wedge\tau_n}^{\tau_n}\int_{U} |V_s(e)|^p  \lambda(de)ds\bigg]<\infty.
\end{eqnarray}

Thus choosing $\epsilon$ small enough we deduce that 

  \begin{eqnarray}   
 \begin{split}
&  \mathbb{E}\bigg[  \bigg( \int_{t\wedge\tau_n}^{\tau_n}|Z_s|^2 ds\bigg)^\frac{p}{2}+ \int_{t\wedge\tau_n}^{\tau_n}\int_{U} |V_s(e)|^p  \lambda(de)ds\bigg]\\
& \qquad \leq C_{p,\kappa,\epsilon}\mathbb{E}\bigg[\sup_{s\in[0,T]}|Y_s|^p+\bigg(\int_{t\wedge\tau_n}^{\tau_n}|f(s,0,0,0)|ds\bigg)^p \bigg].
\end{split}
\end{eqnarray}

Finally, letting $n$ to infinity and using Fatou's Lemma, (\ref{L1}) follows. $\square$\\

\begin{prop}\label{P} 
Assume that $(Y,Z,V)$ is a solution to BSDE (\ref{s2}) where $Y\in\mathcal{S}^p$, for some $p>1$. Then there exists a constant $C_{p,\kappa}$ such that ,
 $$\begin{array}{ll}
   \mathbb{E}\bigg[\sup_{t\in[0,T]} |Y_t|^p+ \bigg(\int_0^T|Z_s|^2ds\bigg)^{\frac{p}{2}}+\int_0^T\int_U |V_s(e)|^p\lambda(de)ds\bigg]   \leq C_{p,\kappa} \mathbb{E}\bigg[ |\xi|^p+\bigg(\int_0^T  |f(s,0,0,0)| ds\bigg)^p\bigg].
 
\end{array}$$
\end{prop}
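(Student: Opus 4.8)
The plan is to combine the a priori estimate of Lemma \ref{L} (which already controls $Z$ and $V$ in terms of $\sup_t|Y_t|$ and the data) with an independent estimate on $\mathbb{E}[\sup_t|Y_t|^p]$ obtained by applying Itô's formula to $|Y|^p$ — or, more safely when $p\in(1,2)$, to $(|Y|^2+\delta)^{p/2}$ with $\delta\downarrow 0$ — again along the localizing stopping times $\tau_n=\inf\{t:\int_0^t|Z_s|^2ds+\int_0^t\int_U|V_s(e)|^p\lambda(de)ds>n\}\wedge T$ introduced in the previous lemma. First I would write, on $[t\wedge\tau_n,\tau_n]$,
\begin{equation}\nonumber
|Y_{t\wedge\tau_n}|^p + c(p)\int_{t\wedge\tau_n}^{\tau_n}|Y_{s^-}|^{p-2}\mathbf 1_{\{Y_{s^-}\neq 0\}}|Z_s|^2\,ds \le |Y_{\tau_n}|^p + p\int_{t\wedge\tau_n}^{\tau_n}|Y_{s}|^{p-1}\,\mathrm{sgn}(Y_s)f(s,Y_s,Z_s,V_s)\,ds + (\text{martingale terms}) + (\text{jump remainder}),
\end{equation}
where the jump remainder coming from the compensator of $\sum(|Y_s|^p-|Y_{s^-}|^p-p|Y_{s^-}|^{p-1}\mathrm{sgn}(Y_{s^-})\Delta Y_s)$ is nonnegative and can be dropped from the left-hand side after moving it across; the sign term $p|Y_s|^{p-1}\mathrm{sgn}(Y_s)f(s,0,0,0)$ is bounded by $p\sup_s|Y_s|^{p-1}\int_0^T|f(s,0,0,0)|\,ds$, and the Lipschitz part is bounded by $p\kappa|Y_s|^{p-1}(|Y_s|+|Z_s|+\|V_s\|_{\mathcal L^p})$.

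Next I would take $\sup_{t\le T}$ and then expectations. The drift contributions involving $|Y_s|^{p-1}|Z_s|$ and $|Y_s|^{p-1}\|V_s\|_{\mathcal L^p}$ are handled by Young's inequality, splitting off a small multiple of $\mathbb{E}[(\int_0^T|Z_s|^2ds)^{p/2}]$ and of $\mathbb{E}[\int_0^T\int_U|V_s(e)|^p\lambda(de)ds]$ plus a large multiple of $\mathbb{E}[\sup_s|Y_s|^p]$; the $|Y_s|^{p-1}|f(s,0,0,0)|$ term likewise splits into $\varepsilon\mathbb{E}[\sup_s|Y_s|^p]$ plus $C_\varepsilon\mathbb{E}[(\int_0^T|f(s,0,0,0)|ds)^p]$. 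The Brownian and Poisson martingale terms are controlled by the Burkholder–Davis–Gundy inequality exactly as in the proof of Lemma \ref{L}: after BDG each is dominated by $\tfrac12\mathbb{E}[\sup_s|Y_s|^p]$ plus a constant multiple of $\mathbb{E}[(\int_0^{\tau_n}|Z_s|^2ds)^{p/2}]$, respectively $\mathbb{E}[(\int_0^{\tau_n}\int_U|V_s(e)|^2\mu(ds,de))^{p/2}]$, and the latter is in turn controlled by $\mathbb{E}[\int_0^{\tau_n}\int_U|V_s(e)|^p\lambda(de)ds]$ via \eqref{Y2}. At this stage I substitute the estimate of Lemma \ref{L} for $\mathbb{E}[(\int|Z|^2)^{p/2}+\int\int|V|^p\lambda\,ds]$ in terms of $\mathbb{E}[\sup_s|Y_s|^p]+\mathbb{E}[(\int|f(s,0,0,0)|ds)^p]$; choosing all the Young parameters small enough, the $\mathbb{E}[\sup_s|Y_s|^p]$ on the right can be absorbed into the left, leaving $\mathbb{E}[\sup_{t\le\tau_n}|Y_t|^p]\le C_{p,\kappa}\mathbb{E}[|Y_{\tau_n}|^p+(\int_0^{\tau_n}|f(s,0,0,0)|ds)^p]\le C_{p,\kappa}\mathbb{E}[\sup_s|Y_s|^p+(\int_0^T|f|ds)^p]$, and then Fatou as $n\to\infty$, together with one more application of Lemma \ref{L}, yields the full bound with $|\xi|^p$ replacing $\sup_s|Y_s|^p$ on the right (using $Y_{\tau_n}\to\xi$ on $\{\tau_n=T\}$; on the complement one keeps $\sup_s|Y_s|^p$, which is finite since $Y\in\mathcal S^p$, but this term must ultimately disappear — see below).

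The main obstacle is the circularity: the crude localized estimate only gives $\mathbb{E}[\sup_t|Y_t|^p]\le C\,\mathbb{E}[\sup_s|Y_s|^p]+\dots$, which is useless unless the absorption is done \emph{before} taking $\sup$ over all of $[0,T]$ — i.e. one must first derive a bound for $\mathbb{E}[|Y_{t\wedge\tau_n}|^p]$ uniformly in $t$ (pointwise, via the conditional-expectation form $|Y_{t\wedge\tau_n}|^p\le\mathbb{E}[\,\cdot\,|\mathcal F_{t\wedge\tau_n}]$ after dropping the true martingale increments, or via Gronwall), obtain from it and Lemma \ref{L} a finite bound on $\mathbb{E}[\sup_t|Y_{t\wedge\tau_n}|^p]$ that is \emph{independent of $n$}, and only afterwards redo the BDG step. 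The other delicate point, specific to $p\in(1,2)$, is that $x\mapsto|x|^p$ is only $C^1$, so the Itô/Meyer–Tanaka formula must be applied to the smooth regularization $(|Y|^2+\delta)^{p/2}$ and $\delta\to0$ taken at the end; one checks the second-derivative term $\tfrac{p}{2}(|Y|^2+\delta)^{p/2-1}|Z|^2$ is nonnegative so it only helps, and the jump sum's compensator is likewise nonnegative by convexity. Finally, letting $n\to\infty$ one must verify that $\mathbb{P}(\tau_n<T)\to0$, which follows because $Z\in\mathcal M^0$ and $V\in\mathcal L^1$ (hence the integrals defining $\tau_n$ are a.s. finite), so the leftover $\mathbb{E}[\sup_s|Y_s|^p\mathbf 1_{\{\tau_n<T\}}]\to0$ by dominated convergence since $Y\in\mathcal S^p$, leaving only the $|\xi|^p$ term.
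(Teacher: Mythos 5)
Your proposal follows essentially the same route as the paper's proof: It\^o's formula applied to $|Y|^p$, the Lipschitz bound plus Young's inequality on the drift, BDG for the Brownian and Poisson martingale parts (with the moment inequality (\ref{Y2}) controlling the jump bracket), absorption/Gronwall justified by the standing hypothesis $Y\in\mathcal{S}^p$, and a final appeal to Lemma \ref{L} for the $Z$ and $V$ terms. The extra precautions you take --- localizing by $\tau_n$, regularizing $|x|^p$ by $(|x|^2+\delta)^{p/2}$ for $p\in(1,2)$, and performing the pointwise/Gronwall estimate before the BDG step --- are sound refinements of technical points the paper treats more casually, not a genuinely different argument.
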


\proof Applying It\^o 's formula to $|Y|^p$ over the interval $[t,T]$. Note that

$$ \begin{array}{ll}
 \frac{\partial\theta}{\partial y_i}(y)=py_i|y|^{p-2},\ \  \frac{\partial^2\theta}{\partial y_i\partial y_j}(y)=p|y|^{p-2}\delta_{i,j}+p(p-2)y_iy_j|y|^{p-4},
 \end{array}$$
where $\delta_{i,j}$ is the Kronecker delta. Thus, for evey $t\in[0,T]$, we have,

\begin{eqnarray}\label{I}
\begin{split}
 &  |Y_t|^p=|\xi|^p+p\int_t^T Y_s |Y_s|^{p-2} f(s,Y_s,Z_s,V_s)ds-p\int_t^T Y_s |Y_s|^{p-2} Z_s dB_s\\\\
 &  \qquad  -p\int_t^T \int_U Y_{s^-}|Y_{s^-}|^{p-2} V_s(e)\tilde{\mu}(ds,de)-\frac{1}{2}\int_t^T trace(D^2\theta(Y_s)Z_sZ^*_s)ds\\\\
 &  \qquad  -\int_t^T\int_U (|Y_{s^-}+V_s(e)|^p-|Y_{s^-}|^p-pY_{s^-}|Y_{s^-}|^{p-2}V_s(e))\mu(ds,de)   
   \end{split}
 \end{eqnarray}
 
First remark that for a non-negative symmetric matrix $\Gamma\in\mathbb{R}^{d\times d}$ we have

$$\begin{array}{ll}
\sum_{1\leq i,j\leq d} D^2\theta(y)_{i,j}\Gamma_{i,j}=p|y|^{p-2} trace(\Gamma)+p(p-2)|y|^{p-4}y^*\Gamma y\\
\qquad\qquad\qquad\qquad\geq p|y|^{p-2} trace(\Gamma),
\end{array}$$

then 
\begin{eqnarray}\label{I1}
 trace(D^2\theta(Y_s)Z_sZ^*_s)\geq p|y|^{p-2}|Z_s|^2.
\end{eqnarray}

Now for the Poisson quantity in (\ref{I}) following the same arguments as in (\cite{KP}, Prop 2) and (\cite{Y}, Lemma A.4), we obtain that

\begin{eqnarray}\label{I2}
\begin{split}
-\int_t^T\int_U (|Y_{s^-}+V_s(e)|^p-|Y_{s^-}|^p-pY_{s^-}|Y_{s^-}|^{p-2}V_s(e))\mu(ds,de)  \\
 \leq -p(p-1)3^{1-p}\int_t^T |Y_{s^-}|^{p-2}|V_s|^2\mu(ds,de).
\end{split}
\end{eqnarray}

Consequently, plugging (\ref{I1}) and (\ref{I2}) in the equation (\ref{I}) becomes

\begin{eqnarray}\label{I3}
\begin{split}
 &  |Y_t|^p+\frac{p(p-1)}{2}\int_t^T |Y_s|^{p-2} |Z_s|^2ds+\frac{p(p-1)}{2}\int_t^T \int_U |Y_s|^{p-2}|V_s|^2\mu(ds,de)\\\\ 
 & \leq |\xi|^p+p\int_t^T Y_s |Y_s|^{p-2} f(s,Y_s,Z_s,V_s)ds-p\int_t^T Y_s |Y_s|^{p-2} Z_s dB_s\\\\
 &  \qquad  -p\int_t^T \int_U Y_{s^-}|Y_{s^-}|^{p-2} V_s(e)\tilde{\mu}(ds,de).
   \end{split}
 \end{eqnarray}

 Since $f$ is Lipschitz then we have 
 
 $$\begin{array}{ll}
p Y_s f(s,Y_s,Z_s,V_s)\leq p|Y_s| f(s,0,0,0)+p\kappa|Y_s|^2+p\kappa |Y_s| |Z_s|+p\kappa |Y_s|  \Vert V_s\Vert_{\mathcal{L}^p}.
 \end{array}$$
 
 By Young's inequality (i.e. $ab\leq\frac{a^2}{2\epsilon}+\frac{\epsilon b^2}{2}$ for every $\epsilon>0$), we have
$$\begin{array}{ll}
     p\kappa| Y_s|   |Z_s|\leq \frac{p^2\kappa^2}{2\epsilon^2}|Y_s|^2+\frac{\epsilon^2}{2}  |Z_s|^2,
 
\end{array}$$

and by the inequality $ab\leq\frac{a^p}{p\epsilon^p}+\frac{\epsilon^q b^q}{q}$ for every $\epsilon>0$ with $\frac{1}{p}+\frac{1}{q}=1$, we get 
 
 $$\begin{array}{ll}
     p\kappa| Y_s| \Vert V_s\Vert_{\mathcal{L}^p}\leq  \frac{p-1}{p}(p\kappa)^\frac{p}{p-1}\epsilon^{-\frac{p}{p-1}}|Y_s|^\frac{p}{p-1} +\frac{\epsilon^p}{p}  \Vert V_s\Vert_{\mathcal{L}^p}^p.
 \end{array}$$

Therefore we have 

\begin{eqnarray} \label{Y3}
\begin{split}
 &  |Y_t|^p+(\frac{p(p-1)}{2}-\frac{\epsilon^2}{2})\int_t^T |Y_s|^{p-2} |Z_s|^2ds+\frac{p(p-1)}{2}\int_t^T \int_U |Y_s|^{p-2}|V_s|^2\mu(ds,de)\\\\ 
 & \leq |\xi|^p+p\int_t^T  |Y_s|^{p-1}f(s,0,0,0)ds+(p\kappa+\frac{p^2\kappa^2}{2\epsilon^2})\int_t^T |Y_s|^{p}ds\\
 & + \frac{p-1}{p}  (p\kappa)^\frac{p}{p-1}\epsilon^{-\frac{p}{p-1}}\int_t^T |Y_s|^{p-1}ds+\frac{\epsilon^p}{p}\int_t^T  \int_U |Y_s|^{p-2}|V_s(e)|^p\lambda(de)ds \\
 & -p \int_t^T Y_s |Y_s|^{p-2} Z_s dB_s -p\int_t^T \int_U Y_{s^-}|Y_{s^-}|^{p-2} V_s(e)\tilde{\mu}(ds,de).
   \end{split}
 \end{eqnarray}
 
Let us set $\Gamma_t=\int_0^t Y_s |Y_s|^{p-2} Z_s dB_s$ and $\Phi_t=\int_0^t  \int_U Y_{s^-}|Y_{s^-}|^{p-2} V_s(e)\tilde{\mu}(ds,de)$. Applying BDG's inequality we deduce that $\Gamma_t$ and $\Phi_t$ are uniformly integrable martingales. Indeed, by Young's inequality , we have
 
 \begin{eqnarray} 
\begin{split}
 & \mathbb{E}\bigg([\Gamma]_T^\frac{1}{2}\bigg)\leq  \mathbb{E}\bigg[ \sup_{s\in[0,T]}|Y_s|^{p-1}\bigg(\int_0^T |Z_s|^2 ds \bigg)^\frac{1}{2}\bigg]\\
& \qquad\qquad \leq\frac{p-1}{p}\mathbb{E}\bigg(\sup_{s\in[0,T]}|Y_s|^p\bigg)+\frac{1}{p}\mathbb{E}\bigg(\int_0^T |Z_s|^2 ds \bigg)^\frac{p}{2}<\infty,
   \end{split}
 \end{eqnarray}
 
 and from (\ref{Y2}),

 \begin{eqnarray} 
\begin{split}
 & \mathbb{E}\bigg([\Phi]_T^\frac{1}{2}\bigg)\leq  \mathbb{E}\bigg[ \sup_{s\in[0,T]}|Y_s|^{p-1}\bigg(\int_0^T |V_s(e)|^2 \mu(ds,de) \bigg)^\frac{1}{2}\bigg]\\
& \qquad\qquad \leq\frac{p-1}{p}\mathbb{E}\bigg(\sup_{s\in[0,T]}|Y_s|^p\bigg)+\frac{1}{p}\mathbb{E}\bigg(\int_0^T |V_s(e)|^2 \mu(ds,de) \bigg)^\frac{p}{2}\\
& \qquad\qquad \leq\frac{p-1}{p}\mathbb{E}\bigg(\sup_{s\in[0,T]}|Y_s|^p\bigg)+\frac{1}{p} \mathbb{E}\bigg[ \int_{t\wedge\tau_n}^{\tau_n}\int_{U} |V_s(e)|^p  \lambda(de)ds\bigg]<\infty.
   \end{split}
 \end{eqnarray}
 
 Thus taking expectation in (\ref{Y3}) leads to that,
 
 \begin{eqnarray} \label{Y4}
\begin{split}
 &(\frac{p(p-1)}{2}-\frac{\epsilon^2}{2}) \mathbb{E}\int_t^T |Y_s|^{p-2} |Z_s|^2ds+\frac{p(p-1)}{2}\mathbb{E}\int_t^T \int_U |Y_s|^{p-2}|V_s(e)|^2 \lambda(de)ds\\\\ 
 & \leq\mathbb{E}\bigg[ |\xi|^p+p\int_t^T  |Y_s|^{p-1}f(s,0,0,0)ds+(p\kappa+\frac{p^2\kappa^2}{2\epsilon^2})\int_t^T |Y_s|^{p}ds \\
 & + \frac{p-1}{p}  (p\kappa)^\frac{p}{p-1}\epsilon^{-\frac{p}{p-1}}\int_t^T |Y_s|^{p-1}ds+\frac{\epsilon^p}{p}\int_t^T  \int_U |Y_s|^{p-2}|V_s(e)|^p\lambda(de)ds \bigg].
   \end{split}
 \end{eqnarray}

 Taking account of (\ref{Y4}), (\ref{Y3}) becomes
 
 \begin{eqnarray} \label{Y5}
\begin{split}
&  \mathbb{E}\sup_t|Y_t|^p \leq \mathbb{E}\bigg[ |\xi|^p+p\int_t^T  |Y_s|^{p-1}f(s,0,0,0)ds+(p\kappa+\frac{p^2\kappa^2}{2\epsilon^2})\int_t^T |Y_s|^{p}ds \\
 & + \frac{p-1}{p}  (p\kappa)^\frac{p}{p-1}\epsilon^{-\frac{p}{p-1}}\int_t^T |Y_s|^{p-1}ds+\frac{\epsilon^p}{p}\int_t^T  \int_U |Y_s|^{p-2}|V_s(e)|^p\lambda(de)ds \bigg]\\
 &   +p\mathbb{E}\bigg(\sup_{s\in[0,T]}\bigg|\int_s^T Y_{s}|Y_{s}|^{p-2} Z_s dB_s\bigg|\bigg)+p\mathbb{E}\bigg(\sup_{s\in[0,T]}\bigg|\int_s^T \int_U  Y_{s^-}|Y_{s^-}|^{p-2}V_s(e) \tilde{\mu}(ds,de)\bigg|\bigg).
    \end{split}
\end{eqnarray}

Using the BDG inequality and Young's inequality (i.e. $ab\leq \frac{a^p}{p}+\frac{b^q}{q}$, with $\frac{1}{p}+\frac{1}{q}=1$) we get

$$\begin{array}{ll}
 & p\mathbb{E}\bigg[\sup_{s\in[0,T]}\bigg|\int_s^T Y_{s}|Y_{s}|^{p-2} Z_s dB_s  \bigg|\bigg] \leq C_p \mathbb{E}\bigg[\bigg(\int_t^T |Y_s|^{2(p-1)} |Z_s|^2ds   \bigg)^{1/2}\bigg]\\
 &\qquad\qquad   \leq C_p \mathbb{E}\bigg[\bigg(\sup_{s\in[0,T]} |Y_s|^{p/2}\bigg) \bigg(\int_t^T |Y_s|^{p-2} |Z_s|^2ds \bigg)^{1/2}\bigg]\\
 &\qquad\qquad  \leq \frac{C_p}{2} \mathbb{E}\bigg[\sup_{s\in[0,T]} |Y_s|^p\bigg]+\frac{1}{2}\mathbb{E}\bigg[ \int_t^T |Y_s|^{p-2} |Z_s|^2ds \bigg].
 
\end{array}$$

and, 

$$\begin{array}{ll}
&  p\mathbb{E}\bigg[ \sup_{s\in[0,T]}\bigg|\int_s^T \int_U  Y_{s^-}|Y_{s^-}|^{p-2}V_s(e)\tilde{\mu}(ds,de)\bigg|\bigg]\\
&\qquad  \leq C_p \mathbb{E}\bigg[\bigg( \int_t^T \int_U|Y_{s}|^{p(p-2)} |V_s(e)|^p\mu(ds,de)\bigg)^{1/p}\bigg]\\
&\qquad   \leq C_p \mathbb{E}\bigg[\bigg(\sup_{s\in[0,T]} |Y_s|^{ \frac{(p-2)(p-1)}{p}}\bigg) \bigg( \int_t^T \int_U|Y_{s}|^{p-2} |V_s(e)|^p\mu(ds,de)\bigg)^{1/p}\bigg]\\
&\qquad   \leq \frac{ p-1}{p} C_p \mathbb{E}\bigg[\sup_{s\in[0,T]} |Y_s|^{p-2}\bigg]+\frac{C_p}{p}  \mathbb{E}\bigg[  \int_t^T \int_U |Y_{s}|^{p-2}| V_s|^p  \mu(ds,de) \bigg].
 
\end{array}$$

Coming back to inequality (\ref{Y5}) with the above estimates we deduce that

 \begin{eqnarray}  
\begin{split}
&  \mathbb{E}\sup_t|Y_t|^p \leq  \mathbb{E}\bigg[ |\xi|^p+p\int_t^T  |Y_s|^{p-1}f(s,0,0,0)ds+(p\kappa+\frac{p^2\kappa^2}{2\epsilon^2})\int_t^T |Y_s|^{p}ds \\
 & + \frac{p-1}{p}  (p\kappa)^\frac{p}{p-1}\epsilon^{-\frac{p}{p-1}}\int_t^T |Y_s|^{p-1}ds  \bigg] 
    \end{split}
\end{eqnarray}

Applying once again Young's inequality, we get

$$\begin{array}{ll}
   & p \int_t^T |Y_s|^{p-1}|f(s,0,0,0)|ds\leq p C_p \bigg(\sup_{s\in[0,T]}|Y_s|^{p-1}\int_t^T |f(s,0,0,0)| ds\bigg)\\
    
    & \qquad\qquad\leq  C_p \bigg(\sup_{s\in[0,T]}|Y_s|^p\bigg)+\frac{1}{p}\bigg(\int_t^T |f(s,0,0,0)|ds\bigg)^p,
 
\end{array}$$

where $C_p$ changes from a line to another. Consequently, 

$$\begin{array}{ll}
   \mathbb{E}\sup_{t\in[0,T]}|Y_t|^p \leq C'_{p} \mathbb{E}\bigg[ |\xi|^p+\bigg(\int_t^T  |f(s,0,0,0)|ds\bigg)^p\bigg]+C''_{p,\kappa} \int_t^T \mathbb{E}\sup_{u\in[s,T]}|Y_u|^pds.
\end{array}$$

Finally, using Gronwall's lemma, ve obtain 

$$\begin{array}{ll}
   \mathbb{E}\sup_{t\in[0,T]}|Y_t|^p \leq C'_{p} e^{C''_{p,\kappa} T} \mathbb{E}\bigg[ |\xi|^p+\bigg(\int_0^T  |f(s,0,0,0)|ds\bigg)^p\bigg].
\end{array}$$

The desired result follows from Lemma \ref{L}. $\square$

\subsection{Uniqueness}
\begin{lem} Under assumptions (A1) and (A2) on $(f,\xi)$, the associated BSDE has at most one solution $(Y,Z,V)$ such that $Y$ belongs to the class $\mathbb{D}$, $Z\in\cup_{\beta>\alpha}\mathcal{M}^\beta$ and $V\in\mathcal{L}^1$.
\end{lem}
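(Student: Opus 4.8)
The plan is to compare two solutions via their difference, reduce to a BSDE with null data and Lipschitz generator, and show the difference vanishes.

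\textbf{Step 1 (reduction).} Let $(Y,Z,V)$ and $(Y',Z',V')$ be two solutions in the stated class and set $\bar Y:=Y-Y'$, $\bar Z:=Z-Z'$, $\bar V:=V-V'$. Then $\bar Y_T=0$ and $(\bar Y,\bar Z,\bar V)$ solves the BSDE with terminal value $0$ and generator $\bar f(s,y,z,v):=f(s,y+Y'_s,z+Z'_s,v+V'_s)-f(s,Y'_s,Z'_s,V'_s)$, which by (A2)(ii) is $\kappa$-Lipschitz in $(y,z,v)$ and satisfies $\bar f(s,0,0,0)=0$. The first point is that membership in $\mathbb{D}$ upgrades to fractional moments of the supremum: for any c\`adl\`ag adapted $X$, stopping at the first passage time of level $\lambda$ gives the weak-type bound $\lambda\,\mathbb{P}(\sup_{t\le T}|X_t|>\lambda)\le\sup_{\tau\in\mathcal{T}}\mathbb{E}|X_\tau|$, whence, integrating against $\gamma\lambda^{\gamma-1}\,d\lambda$, $\mathbb{E}[\sup_{t\le T}|X_t|^\gamma]<\infty$ for all $\gamma\in(0,1)$. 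Thus $\bar Y\in\mathcal{S}^\gamma$ for every $\gamma<1$; fix once and for all $\gamma\in(\alpha,1)$.

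\textbf{Step 2 (a priori control of $\bar Z,\bar V$).} Apply Lemma~\ref{L} to $(\bar Y,\bar Z,\bar V)$ with $p=\gamma$: since $\bar f(\cdot,0,0,0)\equiv0$ the integrability requirement there is automatic, so $\bar Z\in\mathcal{M}^\gamma$, $\bar V\in\mathcal{L}^\gamma$ and
\[
\mathbb{E}\Big[\Big(\int_0^T|\bar Z_s|^2\,ds\Big)^{\gamma/2}+\int_0^T\!\!\int_U|\bar V_s(e)|^\gamma\lambda(de)\,ds\Big]\le C_{\gamma,\kappa}\,\mathbb{E}\Big[\sup_{t\le T}|\bar Y_t|^\gamma\Big].
\]
Combined with $\bar V\in\mathcal{L}^1$ and $\mathbb{E}\int_0^T|\bar Y_s|\,ds\le T\sup_{\tau}\mathbb{E}|\bar Y_\tau|<\infty$, this gives just enough integrability to localise: let $\sigma_k:=\inf\{t\ge0:\int_0^t|\bar Z_s|^2\,ds+\int_0^t\!\int_U|\bar V_s(e)|^\gamma\lambda(de)\,ds>k\}\wedge T$, so $\sigma_k\uparrow T$ and, stopped at $\sigma_k$, the Brownian and compensated-Poisson integrals occurring below are true martingales.

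\textbf{Step 3 (the main estimate).} Apply It\^o's formula to $\vartheta_\varepsilon(\bar Y_\cdot):=(\varepsilon^2+|\bar Y_\cdot|^2)^{\gamma/2}$ on $[t\wedge\sigma_k,\sigma_k]$. Because $\gamma<1$, $x\mapsto|x|^\gamma$ is concave, so the Hessian term $\tfrac12\vartheta_\varepsilon''(\bar Y_s)|\bar Z_s|^2$ and the Poisson convexity-defect term $\int_U\big(\vartheta_\varepsilon(\bar Y_{s^-}+\bar V_s(e))-\vartheta_\varepsilon(\bar Y_{s^-})-\vartheta_\varepsilon'(\bar Y_{s^-})\bar V_s(e)\big)\mu(ds,de)$ — controlled, with the exponent now below $1$, by the same inequalities of \cite{KP,Y} used in Proposition~\ref{P} — carry the sign that lets one absorb, via Young's inequality, the terms $\kappa|\bar Z_s|$ and $\kappa\|\bar V_s\|$ coming from $|\bar f(s,\bar Y_s,\bar Z_s,\bar V_s)|\le\kappa(|\bar Y_s|+|\bar Z_s|+\|\bar V_s\|)$ into $|\bar Y_s|^{\gamma-2}|\bar Z_s|^2$ and $|\bar Y_s|^{\gamma-2}|\bar V_s(e)|^2$. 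Taking $\mathbb{E}[\cdot\,|\,\mathcal{F}_{t\wedge\sigma_k}]$ (so the stopped stochastic integrals drop out), then letting $\varepsilon\downarrow0$ and $k\to\infty$ — the passages being justified by the class-$\mathbb{D}$ property and the bound of Lemma~\ref{L} — yields a constant $c=c(\gamma,\kappa)$ with
\[
\mathbb{E}\Big[\sup_{s\in[t,T]}|\bar Y_s|^\gamma\Big]\le c\int_t^T\mathbb{E}\Big[\sup_{u\in[s,T]}|\bar Y_u|^\gamma\Big]ds,\qquad t\in[0,T].
\]
Gronwall's lemma forces $\mathbb{E}[\sup_{s\le T}|\bar Y_s|^\gamma]=0$, hence $\bar Y\equiv0$; feeding this back into the equation gives $\int_t^T\bar Z_s\,dB_s+\int_t^T\!\int_U\bar V_s(e)\tilde\mu(ds,de)=0$ for all $t$, and taking brackets forces $\bar Z=0$, $dt\,d\mathbb{P}$-a.e., and $\bar V=0$, $dt\,\lambda(de)\,d\mathbb{P}$-a.e. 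Thus the two solutions coincide.

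\textbf{Main obstacle.} The whole difficulty is concentrated in Step 3. Contrary to Proposition~\ref{P}, here $(\bar Z,\bar V)$ lies only in $\mathcal{M}^\gamma\times\mathcal{L}^\gamma$ with $\gamma<1$, so one cannot take expectations directly in the It\^o identity; one must work along $\sigma_k$, keep track of the $\varepsilon$-regularisation of the non-smooth power $|\cdot|^\gamma$, and use the jump inequalities of \cite{KP,Y} in their form for exponents in $(0,1)$ — where the relevant inequality points the opposite way to the case $p>1$ — all before absorbing the linear-in-$(z,v)$ part of the Lipschitz generator and passing to the limit. The hypothesis $Z\in\cup_{\beta>\alpha}\mathcal{M}^\beta$ is precisely what allows a choice of $\gamma\in(\alpha,1)$ with $Z,Z'\in\mathcal{M}^\gamma$, so that Lemma~\ref{L} applies and the localisation can be carried out; it is also the class in which the solution is produced in Section~4, so uniqueness is stated in exactly that class.
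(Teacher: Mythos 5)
Your Step 1 is fine as far as it goes: class $\mathbb{D}$ does give $\mathbb{E}[\sup_{t\le T}|\bar Y_t|^\gamma]<\infty$ for every $\gamma\in(0,1)$ via the weak-type maximal inequality. But Step 3, where you concentrate the whole difficulty, rests on a sign error that kills the argument. For a convex power $p>1$, It\^o's formula yields $|\bar Y_t|^p+\frac{p(p-1)}{2}\int_t^T|\bar Y_s|^{p-2}|\bar Z_s|^2ds+(\text{positive jump term})\le\dots$, and it is these \emph{non-negative terms on the left} that absorb, via Young, the contributions $\kappa|\bar Y_s|^{p-1}|\bar Z_s|$ and $\kappa|\bar Y_s|^{p-1}\Vert\bar V_s\Vert$ of the Lipschitz generator. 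For $\gamma<1$ the function $|\cdot|^\gamma$ is concave away from the origin, so the second-order term $\frac{\gamma(\gamma-1)}{2}|\bar Y_s|^{\gamma-2}|\bar Z_s|^2$ and the jump convexity-defect both change sign: after rearranging they appear as \emph{positive} terms on the right-hand side of the upper bound for $|\bar Y_t|^\gamma$. They cannot absorb anything; on the contrary, you are left with the uncontrolled cross terms $\gamma\kappa|\bar Y_s|^{\gamma-1}|\bar Z_s|$ and $\gamma\kappa|\bar Y_s|^{\gamma-1}\Vert\bar V_s\Vert$ plus new positive quadratic terms. No choice of $\varepsilon$-regularisation or localisation fixes this; the inequality simply points the wrong way. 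This is precisely why uniqueness in class $\mathbb{D}$ for generators genuinely Lipschitz in $(z,v)$ is not provable by a bare concave-power estimate, and why the paper (following \cite{BDHPS}) imposes the structural assumption (A2)(iv), which your proof never invokes.

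The paper's route is different and does use (iv) essentially. First, It\^o--Tanaka applied to $|Y-Y'|$ (first power, convex, so the local time and jump defects have the favourable sign) reduces the generator difference, via (iv), to $C(g_s+|Y_s|+|Y'_s|+|Z_s|+|Z'_s|+\Vert V_s\Vert+\Vert V'_s\Vert)^\alpha$ with $\alpha<1$; taking conditional expectations and applying Doob's inequality with $p>1$ chosen so that $p\alpha<\beta$ (this, not your Step 2, is where the hypothesis $Z\in\cup_{\beta>\alpha}\mathcal{M}^\beta$ is consumed) shows $Y-Y'\in\mathcal{S}^p$ for some $p>1$. Only then does the standard convex-power argument of Proposition \ref{P}, followed by Gronwall, close the proof. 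To repair your proposal you would need to replace Step 3 entirely by this two-stage scheme: the upgrade from class $\mathbb{D}$ to $\mathcal{S}^p$ with $p>1$ is the missing idea, and assumption (iv) is the tool that makes it possible.
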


\begin{proof} Assume that $(Y,Z,V)$ and $(Y',Z',V')$ are two solutions of (\ref{s2}). For any $n\geq 0$, let us define $\tau_n$ as follows:
$$\tau_n=\inf\{t\geq 0;\int_0^t(|Z_s|^2+|Z'_s|^2)ds+\int_t^T\int_{\mathcal{U}}(|V_s|^2+|V'_s|^2)  \lambda(de)ds>n\}\wedge T.$$

We first show that there exists a constant $p>1$ such that $Y-Y'$ belongs to $\mathcal{S}^p$. Applying It\^o-Tanaka's formula on $[t\wedge\tau_n,\tau_n]$ gives,

$$\begin{array}{ll}
\begin{split}
& |Y_{t\wedge\tau_n}-Y'_{t\wedge\tau_n}|=|Y_{ \tau_n}-Y'_{ \tau_n}|+\int_{t\wedge\tau_n}^{\tau_n} sgn(Y_{s^-}-Y'_{s^-}) d(Y_s-Y'_s)+ L_t^0(Y-Y')
 ,
\end{split}
\end{array}$$

where the process $(L_t^0(Y-Y'))_{t\leq T}$ is the local time of the semi martingale $(Y_s-Y'_s)_{0\leq s\leq T}$ at $0$ which is a non-negative process (for more details one can see theorem 68 in \cite{PR} p.213) and $sgn(Y-Y'):=\frac{Y-Y'}{|Y-Y'|}\mathbf{1}_{Y\neq Y'}$. Then we have,

\begin{eqnarray}\label{e}
\begin{split}
& |Y_{t\wedge\tau_n}-Y'_{t\wedge\tau_n}|\leq|Y_{ \tau_n}-Y'_{ \tau_n}| +\int_{t\wedge\tau_n}^{\tau_n} sgn(Y_{s^-}-Y'_{s^-})
\bigg[f(s,Y_s,Z_s,V_s)-f(s,Y'_s,Z'_s,V'_s)\bigg]ds\\
& \qquad \qquad \qquad +\int_{t\wedge\tau_n}^{\tau_n} sgn(Y_{s^-}-Y'_{s^-})  (Z_s-Z'_s) dB_s\\
& \qquad \qquad \qquad -\int_{t\wedge\tau_n}^{\tau_n} \int_{U}sgn(Y_{s^-}-Y'_{s^-})(V_s(e)-V'_s(e))\tilde{\mu}(ds,de) .
\end{split}
\end{eqnarray}
 
Thus, using the (iv) assumption on $f$, we get

$$\begin{array}{ll}
\begin{split}
& sgn(Y_{s^-}-Y'_{s^-})
\bigg[f(s,Y_s,Z_s,V_s)-f(s,Y'_s,Z'_s,V'_s)\bigg]=sgn(Y_{s^-}-Y'_{s^-})\bigg[f(s,Y_s,Z_s,V_s)-f(s,Y_s,0,0)\\
& \qquad \qquad \qquad +f(s,Y_s,0,0)-f(s,Y'_s,0,0)+f(s,Y'_s,0,0)-f(s,Y'_s,Z'_s,V'_s)\bigg]\\
& \qquad \qquad \qquad \leq C sgn(Y_{s^-}-Y'_{s^-})\bigg(g_s+|Y_s|+|Y'_s|+|Z_s|+|Z'_s|+\Vert V_s \Vert+\Vert V'_s \Vert\bigg)^\alpha.
 \end{split}
\end{array}$$

Next taking conditional expectation in (\ref{e}) with respect to $\mathcal{F}_{t\wedge\tau_n}$ on both hand-sides and taking into account that the two last terms are $\mathcal{F}_{t\wedge\tau_n}$- martingales we obtain,

 $$\begin{array}{ll}
\begin{split}
& \bigg|Y_{t\wedge\tau_n}-Y'_{t\wedge\tau_n}\bigg|\leq \mathbb{E}\bigg[|Y_{ \tau_n}-Y'_{ \tau_n}| +C \int_{t\wedge\tau_n}^{\tau_n} sgn(Y_{s^-}-Y'_{s^-})\\
& \qquad \qquad \bigg(g_s+|Y_s|+|Y'_s|+|Z_s|+|Z'_s|+\Vert V_s \Vert+\Vert V'_s \Vert\bigg)^\alpha ds|\mathcal{F}_{t\wedge\tau_n}\bigg]. 
\end{split}
\end{array}$$
 
 Taking now the limit as $n\rightarrow\infty$ we get
 
 \begin{eqnarray}\label{i4}
 \bigg|Y_{t}-Y'_{t}\bigg|\leq C \mathbb{E}\bigg[ \int_{t}^{T} sgn(Y_{s^-}-Y'_{s^-})\bigg(g_s+|Y_s|+|Y'_s|+|Z_s|+|Z'_s|+\Vert V_s \Vert+\Vert V'_s \Vert\bigg)^\alpha ds|\mathcal{F}_{t }\bigg]. 
 \end{eqnarray}
 
For $p>1$ such that $p\alpha<\beta$, applying Doob's inequality to have

   $$\begin{array}{ll}
 
\mathbb{E}\bigg[\sup_{t\leq T}|Y_{t}-Y'_{t}|^p\bigg]\leq C \mathbb{E}\bigg[ \int_{0}^{T} sgn(Y_{s^-}-Y'_{s^-})\bigg(g_s+|Y_s|+|Y'_s|+|Z_s|+|Z'_s|+\Vert V_s \Vert+\Vert V'_s \Vert\bigg)^{\alpha p} ds\bigg]<\infty. 
 
\end{array}$$
 
Hence  $|Y-Y'|$ belongs to $\mathcal{S}^p$, for some $p>1$.\\

Now let $p>1$. By It\^o-Tanaka formula on $[t\wedge\tau_n,\tau_n]$ we have

\begin{eqnarray}\label{e}
\begin{split}
& |Y_{t\wedge\tau_n}-Y'_{t\wedge\tau_n}|^p+\frac{p(p-1)}{2}\int_{t\wedge\tau_n}^{\tau_n}|Y_s-Y'_s|^{p-2}sgn(Y_{s^-}-Y'_{s^-})  |Z_s-Z'_s|^2ds\\
&\qquad +\frac{p(p-1)}{2}\int_{t\wedge\tau_n}^{\tau_n} \int_{U} |Y_s-Y'_s|^{p-2}sgn(Y_{s^-}-Y'_{s^-})  (V_s(e)-V'_s(e))^2 \mu(ds,de)  \\ 
& \qquad =|Y_{ \tau_n}-Y'_{ \tau_n}|^p +p\int_{t\wedge\tau_n}^{\tau_n}
sgn(Y_{s^-}-Y'_{s^-})|Y_s-Y'_s|^{p-1}\bigg[f(s,Y_s,Z_s,V_s)-f(s,Y'_s,Z'_s,V'_s)\bigg]ds\\
&\qquad -p\int_{t\wedge\tau_n}^{\tau_n} sgn(Y_{s^-}-Y'_{s^-})|Y_s-Y'_s|^{p-1}   (Z_s-Z'_s) dB_s\\
& \qquad -p \int_{t\wedge\tau_n}^{\tau_n} \int_{U} sgn(Y_{s^-}-Y'_{s^-})|Y_s-Y'_s|^{p-1} (V_s(e)-V'_s(e))^2\tilde{\mu}(ds,de)  .
\end{split}
\end{eqnarray}
 From the Lipschitz property of $f$, there exist bounded and $\mathcal{F}_{t }$-adapted processes $(a_t)_{t\in[0,T]}$, $(b_t)_{t\in[0,T]}$ and $(c_t)_{t\in[0,T]}$ such that 
 
$$\begin{array}{ll}
 f(s,Y_s,Z_s,V_s)-f(s,Y'_s,Z'_s,V'_s)=a_s(Y_s-Y'_s)+b_s(Z_s-Z'_s)+c_s\int_{U}(V_s(e)-V'_s(e)) \lambda(de).

\end{array}$$

Therefore for any $t\leq T$, the equality (\ref{e}) becomes

\begin{eqnarray}  \label{e1}
\begin{split}
& |Y_{t\wedge\tau_n}-Y'_{t\wedge\tau_n}|^p+\frac{p(p-1)}{2}\int_{t\wedge\tau_n}^{\tau_n}|Y_s-Y'_s|^{p-2}sgn(Y_{s^-}-Y'_{s^-})  |Z_s-Z'_s|^2ds\\
&\qquad +\frac{p(p-1)}{2}\int_{t\wedge\tau_n}^{\tau_n} \int_{U} |Y_s-Y'_s|^{p-2}sgn(Y_{s^-}-Y'_{s^-})  (V_s(e)-V'_s(e))^2 \mu(ds,de)  \\ 
& \qquad \leq |Y_{ \tau_n}-Y'_{ \tau_n}|^p +p\kappa\int_{t\wedge\tau_n}^{\tau_n}
sgn(Y_{s^-}-Y'_{s^-})|Y_s-Y'_s|^{p} ds\\
&\qquad +p\kappa\int_{t\wedge\tau_n}^{\tau_n}
sgn(Y_{s^-}-Y'_{s^-})|Y_s-Y'_s|^{p-1}|Z_s-Z'_s|ds\\
&\qquad +p\kappa\int_{t\wedge\tau_n}^{\tau_n}\int_{U}
sgn(Y_{s^-}-Y'_{s^-})|Y_s-Y'_s|^{p-1}(V_s(e)-V'_s(e))\lambda( de)ds\\
&\qquad -p\int_{t\wedge\tau_n}^{\tau_n} sgn(Y_{s^-}-Y'_{s^-})|Y_s-Y'_s|^{p-1}   (Z_s-Z'_s) dB_s\\
& \qquad -p \int_{t\wedge\tau_n}^{\tau_n} \int_{U} sgn(Y_{s^-}-Y'_{s^-})|Y_s-Y'_s|^{p-1} (V_s(e)-V'_s(e))^2\tilde{\mu}(ds,de)  .
\end{split}
\end{eqnarray}

Applying Young's inequality (i.e. $ab\leq\frac{a^2}{2\epsilon}+\frac{\epsilon b^2}{2}$ with $\epsilon=\frac{p-1}{2p}$)

$$\begin{array}{ll}
 p\kappa |Y_s-Y'_s|^{p-1}|Z_s-Z'_s|\leq \frac{p\kappa^2}{(p-1)}|Y_s-Y'_s|^p+\frac{p(p-1)}{4}|Y_s-Y'_s|^{p-2}|Z_s-Z'_s|^2,
 
\end{array}$$

and by $ab\leq\frac{a^p}{p\epsilon^p}+\frac{\epsilon^q b^q}{q}$ with $\frac{1}{p}+\frac{1}{q}=1$ we have ,

$$\begin{array}{ll}
 p\kappa |Y_s-Y'_s|^{p-1}(V_s(e)-V'_s(e)) \leq (p-1)\kappa^\frac{p}{(p-1)}p^{p-1}\epsilon^{-\frac{p}{p-1}}|Y_s-Y'_s|^{p-1}+\frac{\epsilon^p}{p}|Y_s-Y'_s|^{p-2}(V_s(e)-V'_s(e))^p.
 
\end{array}$$

Following the same arguments as in the proof of proposition \ref{P} and going back to (\ref{e1}) to obtain
$$\begin{array} {ll}
\begin{split}
& |Y_{t\wedge\tau_n}-Y'_{t\wedge\tau_n}|^p  \leq |Y_{ \tau_n}-Y'_{ \tau_n}|^p +(p\kappa+\frac{p\kappa^2}{(p-1)})\int_{t\wedge\tau_n}^{\tau_n}
sgn(Y_{s^-}-Y'_{s^-})|Y_s-Y'_s|^{p} ds\\
&\qquad +(p-1)\kappa^\frac{p}{(p-1)}p^{p-1}\epsilon^{-\frac{p}{p-1}}\int_{t\wedge\tau_n}^{\tau_n}
sgn(Y_{s^-}-Y'_{s^-})|Y_s-Y'_s|^{p-1}ds\\
& \qquad -p\int_{t\wedge\tau_n}^{\tau_n} sgn(Y_{s^-}-Y'_{s^-})|Y_s-Y'_s|^{p-1}   (Z_s-Z'_s) dB_s\\
&\qquad -p \int_{t\wedge\tau_n}^{\tau_n}\int_{U} sgn(Y_{s^-}-Y'_{s^-})|Y_s-Y'_s|^{p-1} (V_s(e)-V'_s(e))^2\tilde{\mu}(ds,de)  .
\end{split}
\end{array}$$

Finally taking expectation and since the two latter terms are martingales due to Lemma \ref{Y1}, we have

$$\begin{array}{ll}
 \mathbb{E}\bigg[|Y_{t\wedge\tau_n}-Y'_{t\wedge\tau_n}|^p\bigg]\leq\mathbb{E}\bigg[|Y_{ \tau_n}-Y'_{ \tau_n}|^p +C_{p,\kappa}\int_{t\wedge\tau_n}^{\tau_n}
sgn(Y_{s^-}-Y'_{s^-})|Y_s-Y'_s|^{p} ds\bigg]
\end{array}$$

As for some $p>1$, $|Y-Y'|\in\mathcal{S}^p$ then taking the limit with respect to $n$ to get

$$\begin{array}{ll}
 \mathbb{E}\bigg[|Y_{t }-Y'_{t }|^p\bigg]\leq C_{p,\kappa}  \mathbb{E}\bigg[ \int_{t }^{T}
sgn(Y_{s^-}-Y'_{s^-})|Y_s-Y'_s|^{p} ds\bigg].
\end{array}$$

From Gronwall's lemma we conclude that $\mathbb{E}[|Y_{t }-Y'_{t }|^p]=0$, $\forall t\leq T$. Then $Y_t=Y'_t$, for all $t\leq T$.\\

Since there exist $\beta>\alpha$ and $\beta'>\alpha$ such that $Z\in\mathcal{M}^{\beta}$ and $Z'\in\mathcal{M}^{\beta'}$, then $Z$ and $Z'$ belong to $\mathcal{M}^{\beta\vee\beta'}$. We have also that $V=V'$. Consequently $(Y,Z,V)=(Y',Z',V')$.  We conclude that the BSDE (\ref{s2}) has at most one solution $(Y,Z,V)$ such that $Y$ belongs to the class $\mathbb{D}$, $Z\in\cup_{\beta>\alpha}\mathcal{M}^\beta$ and $V\in\mathcal{L}^1$.
 $\square$\\
\end{proof}

\subsection{Existence of a $\mathbb{D}$-solution}

We will need the following assumption on the data, for some $p>1$,\\

(A3) $$\mathbb{E}[|\xi|^p+(\int_0^T|f(s,0,0,0)|ds)^p]<+\infty.$$

Let us recall the following result. A proof can be found in (\cite{LT}, Lemma 2.4).

\begin{thm}\label{T1} For $p=2$, under assumptions (A2ii) and (A3) on the data $(\xi,f)$ then there exists a unique triple $(Y,Z,V)\in\mathcal{S}^2\times\mathcal{M}^2\times\mathbb{L}^2(\tilde{\mu})$ which solves the following BSDE

$$\begin{array}{ll}
   Y_t=\xi+\int_{t}^{T}f(s,Y_s,Z_s,V_s)ds -\int_{t}^{T}Z_s dB_s-\int_t^T \int_U V_s(e) \tilde{\mu}(ds,de),\ 0\leq t\leq T.
\end{array}$$ 
\end{thm}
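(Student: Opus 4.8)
The statement is just a recollection of the classical Tang–Li $L^2$-theory, so the plan is to reproduce the standard fixed-point argument rather than to invent anything new. First I would reduce the equation to a contraction on the Banach space $\mathcal{H}^2:=\mathcal{S}^2\times\mathcal{M}^2\times\mathbb{L}^2(\tilde\mu)$ equipped (for a parameter $\beta>0$ to be chosen) with the weighted norm
$$
\|(Y,Z,V)\|_\beta^2=\mathbb{E}\Big[\sup_{t\le T}e^{\beta t}|Y_t|^2+\int_0^T e^{\beta s}|Z_s|^2\,ds+\int_0^T\!\!\int_U e^{\beta s}|V_s(e)|^2\lambda(de)\,ds\Big],
$$
which is equivalent to the unweighted one for each fixed $\beta,T$. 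For a given $(U,Z,V)\in\mathcal{H}^2$ I would set $F_s:=f(s,U_s,Z_s,V_s)$; by (A2)(ii) and (A3) the process $s\mapsto F_s$ is in $\mathcal{M}^1$ (indeed $\mathbb{E}(\int_0^T|F_s|ds)^2<\infty$), hence the martingale $\mathbb{E}[\xi+\int_0^T F_s\,ds\mid\mathcal{F}_t]$ is square integrable and, by the martingale representation theorem in the filtration generated by $B$ and $\mu$ (which holds under the hypotheses of Section 2), admits a representation $\xi+\int_0^T F_s\,ds=\mathbb{E}[\cdots]+\int_0^T \bar Z_s\,dB_s+\int_0^T\!\int_U \bar V_s(e)\tilde\mu(ds,de)$ with $\bar Z\in\mathcal{M}^2$, $\bar V\in\mathbb{L}^2(\tilde\mu)$. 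Setting $\bar Y_t:=\mathbb{E}[\xi+\int_t^T F_s\,ds\mid\mathcal{F}_t]$ defines a map $\Phi(U,Z,V)=(\bar Y,\bar Z,\bar V)$ whose fixed points are exactly the solutions of the BSDE.

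Next I would show $\Phi$ maps $\mathcal{H}^2$ into itself and is a contraction for $\beta$ large. Given two inputs, write $(\delta Y,\delta Z,\delta V)=\Phi(U^1,Z^1,V^1)-\Phi(U^2,Z^2,V^2)$ and $\delta F_s=f(s,U^1_s,Z^1_s,V^1_s)-f(s,U^2_s,Z^2_s,V^2_s)$. Applying Itô's formula to $e^{\beta s}|\delta Y_s|^2$ on $[t,T]$, taking expectations (the stochastic integrals are true martingales by the $\mathbb{L}^2$ bounds, and the jump part contributes a nonnegative $\int\!\int e^{\beta s}|\delta V_s(e)|^2\mu$ term exactly as in \eqref{I4}) yields
$$
\mathbb{E}\Big[e^{\beta t}|\delta Y_t|^2+\int_t^T\!\! e^{\beta s}\big(\beta|\delta Y_s|^2+|\delta Z_s|^2\big)ds+\int_t^T\!\!\int_U e^{\beta s}|\delta V_s(e)|^2\lambda(de)ds\Big]
=2\,\mathbb{E}\int_t^T e^{\beta s}\delta Y_s\,\delta F_s\,ds .
$$
Bounding $|\delta F_s|\le\kappa(|\delta U_s|+|\delta Z_s|+\|\delta V_s\|_{\mathcal{L}^p})$ via (A2)(ii) and then using $2ab\le\frac{\kappa^2}{\eta}a^2+\eta b^2$ with a small $\eta$ to absorb the $|\delta Z|^2$ and $\|\delta V\|^2$ terms into the left side, one gets, after choosing $\beta$ large enough (depending only on $\kappa$), that the $\mathcal{M}^2\times\mathbb{L}^2$-part of $(\delta Y,\delta Z,\delta V)$ is bounded by a factor $<\tfrac12$ times $\|(\delta U,\delta Z^{(1)}-\delta Z^{(2)},\delta V)\|_\beta^2$; a Doob/BDG estimate on $\sup_t e^{\beta t}|\delta Y_t|^2$ then upgrades this to a genuine contraction in $\|\cdot\|_\beta$. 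By Banach's fixed point theorem $\Phi$ has a unique fixed point in $\mathcal{H}^2$, which is the unique solution $(Y,Z,V)\in\mathcal{S}^2\times\mathcal{M}^2\times\mathbb{L}^2(\tilde\mu)$, and the $\mathcal{S}^2$-bound on $Y$ together with the a priori estimate of Proposition \ref{P} (with $p=2$) is automatic.

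The one genuinely delicate point is the martingale representation step: one must know that every $\mathbb{L}^2$ martingale in the filtration $(\mathcal{F}_t)$ decomposes as a Brownian stochastic integral plus a compensated-Poisson stochastic integral. This is where the specific structural assumption on $(\mathcal{F}_t)$ in Section 2 — that it is generated by $B$ and $\mu$ — is used; with that, the representation is standard (Jacod–Shiryaev, or Tang–Li Lemma 2.3), and the rest is the routine Picard/contraction bookkeeping sketched above. Since the statement explicitly cites (\cite{LT}, Lemma 2.4), it suffices to invoke that reference for the representation and present the contraction argument as above.
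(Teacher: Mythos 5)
Your proposal is correct and is essentially the argument behind the result as the paper presents it: the paper gives no proof of Theorem \ref{T1} at all, simply citing (\cite{LT}, Lemma 2.4), and that reference's proof is exactly the martingale-representation-plus-contraction scheme you sketch (weighted $\beta$-norm, It\^o on $e^{\beta s}|\delta Y_s|^2$, absorption of the $|\delta Z|^2$ and $\|\delta V\|^2$ terms, Banach fixed point). You correctly identify the martingale representation in the filtration generated by $B$ and $\mu$ as the one structural ingredient that must be supplied; nothing further is needed.
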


For a given $p\in(1,2)$, (\cite{Y}, Theorem 4.1) the author proved the following result. 

\begin{thm}\label{T2} For $p\in(1,2)$, under assumptions (A2ii) and (A3) on the data $(\xi,f)$ then the BSDE with jumps admits a unique solution $(Y,Z,V)\in\mathcal{S}^p\times\mathcal{M}^p\times\mathcal{L}^p$.  

$$\begin{array}{ll}
   Y_t=\xi+\int_{t}^{T}f(s,Y_s,Z_s,V_s)ds -\int_{t}^{T}Z_s dB_s-\int_t^T \int_U V_s(e) \tilde{\mu}(ds,de),\ 0\leq t\leq T.
\end{array}$$ 
\end{thm}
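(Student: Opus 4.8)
The plan is to reduce the statement to the $L^2$-theory of Theorem~\ref{T1} by a truncation-and-passage-to-the-limit argument, with the a priori estimate of Proposition~\ref{P} as the main tool; uniqueness comes for free from the same estimate. Indeed, if $(Y,Z,V)$ and $(Y',Z',V')$ both solve the BSDE in $\mathcal{S}^p\times\mathcal{M}^p\times\mathcal{L}^p$, their difference solves the BSDE with terminal value $0$ and generator $\bar g(s,\omega,y,z,v):=f(s,\omega,Y'_s+y,Z'_s+z,V'_s+v)-f(s,\omega,Y'_s,Z'_s,V'_s)$, which is $\kappa$-Lipschitz in $(y,z,v)$ and satisfies $\bar g(s,0,0,0)\equiv0$; since the proof of Proposition~\ref{P} uses only the Lipschitz property and the integrability of the free term, it applies to $\bar g$, and with zero data it forces $(Y,Z,V)=(Y',Z',V')$.

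For existence I would truncate the data: fix a bounded Borel map $q_n:\mathbb{R}\to\mathbb{R}$ with $|q_n(x)|\le|x|\wedge n$ and $q_n(x)\to x$, and set $\xi_n:=q_n(\xi)$ and $f_n(s,\omega,\cdot):=f(s,\omega,\cdot)-f(s,\omega,0,0,0)+q_n(f(s,\omega,0,0,0))$. Then $\xi_n$ and $f_n(\cdot,0,0,0)=q_n(f(\cdot,0,0,0))$ are bounded, $f_n$ is still $\kappa$-Lipschitz, and $(\xi_n,f_n)$ satisfies (A3) with $p=2$, so Theorem~\ref{T1} provides solutions $(Y^n,Z^n,V^n)\in\mathcal{S}^2\times\mathcal{M}^2\times\mathbb{L}^2(\tilde{\mu})$. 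Since $Y^n\in\mathcal{S}^2\subset\mathcal{S}^p$, Proposition~\ref{P} applies with exponent $p$ and gives the uniform bound $\|Y^n\|_{\mathcal{S}^p}^p+\|Z^n\|_{\mathcal{M}^p}^p+\|V^n\|_{\mathcal{L}^p}^p\le C_{p,\kappa}\,\mathbb{E}[|\xi|^p+(\int_0^T|f(s,0,0,0)|ds)^p]<\infty$; in particular $(Y^n,Z^n,V^n)\in\mathcal{S}^p\times\mathcal{M}^p\times\mathcal{L}^p$. Applying the same estimate to the difference $(Y^n-Y^m,Z^n-Z^m,V^n-V^m)$, which solves a BSDE with $\kappa$-Lipschitz generator and free term $q_n(f(\cdot,0,0,0))-q_m(f(\cdot,0,0,0))$, yields $\|Y^n-Y^m\|_{\mathcal{S}^p}^p+\|Z^n-Z^m\|_{\mathcal{M}^p}^p+\|V^n-V^m\|_{\mathcal{L}^p}^p\le C_{p,\kappa}\,\mathbb{E}[|\xi_n-\xi_m|^p+(\int_0^T|q_n(f(s,0,0,0))-q_m(f(s,0,0,0))|ds)^p]$, whose right-hand side tends to $0$ as $m,n\to\infty$ by dominated convergence and (A3). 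Hence $(Y^n,Z^n,V^n)$ is Cauchy in the complete space $\mathcal{S}^p\times\mathcal{M}^p\times\mathcal{L}^p$ and converges there to some $(Y,Z,V)$.

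Finally I would pass to the limit in the equation satisfied by $(Y^n,Z^n,V^n)$: the Lebesgue term converges in $L^1$, uniformly in $t$, to $\int_t^Tf(s,Y_s,Z_s,V_s)ds$ by the Lipschitz bound together with $\mathbb{E}\int_0^T|q_n(f(s,0,0,0))-f(s,0,0,0)|ds\to0$; the Brownian integral converges by the BDG inequality and $\|Z^n-Z\|_{\mathcal{M}^p}\to0$; and the compensated Poisson integral converges because, by Lemma~\ref{Y1} and the inequality $\mathbb{E}[(\int_0^T\int_U|W_s(e)|^2\mu(ds,de))^{p/2}]\le\mathbb{E}[\int_0^T\int_U|W_s(e)|^p\lambda(de)ds]$ used in (\ref{Y2}), the map $W\mapsto\int_0^{\cdot}\int_UW_s(e)\tilde{\mu}(ds,de)$ is continuous from $\mathcal{L}^p$ into $\mathcal{S}^p$, so $\|V^n-V\|_{\mathcal{L}^p}\to0$ forces convergence of the jump integrals. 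Extracting a subsequence along which all these convergences hold $\mathbb{P}$-a.s., the identity holds for each fixed $t$ almost surely, and right-continuity of the paths upgrades it to an identity valid simultaneously for all $t\le T$, so $(Y,Z,V)$ solves the BSDE; with the uniqueness above this proves the theorem. I expect the main obstacle to be exactly this last passage to the limit in the jump term (and, symmetrically, the re-derivation of the a priori estimates for the differences): for $p\in(1,2)$ the $L^2$-isometry is unavailable, so one must work throughout with the generalized Poisson stochastic integral of Lemma~\ref{Y1} and with the inequality (\ref{Y2}) bounding $\mathbb{E}[(\int_0^T\int_U|V_s(e)|^2\mu(ds,de))^{p/2}]$ by $\|V\|_{\mathcal{L}^p}^p$. $\square$
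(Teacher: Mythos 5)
The paper offers no proof of Theorem \ref{T2}: it is imported wholesale from \cite{Y} (Theorem 4.1) and used as a black box in the $p=1$ argument, so there is nothing internal to compare your proposal against. Your proposal itself is a correct and genuinely different (more self-contained) route: truncating $\xi$ and $f(\cdot,0,0,0)$ to bounded data, solving in $\mathcal{S}^2\times\mathcal{M}^2\times\mathbb{L}^2(\tilde{\mu})$ via Theorem \ref{T1}, extracting uniform $\mathcal{S}^p\times\mathcal{M}^p\times\mathcal{L}^p$ bounds and the Cauchy property from Proposition \ref{P} applied to the difference equations (whose free term is $q_n(f(\cdot,0,0,0))-q_m(f(\cdot,0,0,0))$), and passing to the limit with BDG plus the inequality (\ref{Y2}) standing in for the unavailable It\^o isometry. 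This is the Briand--Delyon--Hu--Pardoux--Stoica scheme \cite{BDHPS} adapted to the Poisson term, and it is essentially the same architecture the paper itself deploys later for the $p=1$ case (the proposition with $f$ independent of $z,v$ and Theorem \ref{T}); by contrast, Yao's original proof is built to handle unbounded random horizons and weaker conditions on the generator, which is presumably why the paper cites it rather than reproving it. What your approach buys is elementarity and consistency with the paper's own toolbox; what the citation buys is extra generality. Two points you should make explicit if you write this out: Proposition \ref{P} is stated for solutions of (\ref{s2}) with the fixed generator $f$, so you must note that its proof uses only the Lipschitz constant and the process $f(\cdot,0,0,0)$, which legitimates applying it to the shifted generators of the difference equations; and your uniqueness argument presupposes $Y-Y'\in\mathcal{S}^p$, which is part of the hypothesis of Theorem \ref{T2} here but is weaker than the class-$\mathbb{D}$ uniqueness lemma of Section 3.2, which requires the separate It\^o--Tanaka argument.
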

\vspace{0.8cm}
We now prove our main existence result for $p=1$.

\begin{thm}\label{T}
Let assumptions (A1) and (A2) on $(f,\xi)$ hold. Then the associated BSDE (\ref{s2}) has a solution $(Y,Z,V)$ such that $Y$ belongs to class $\mathbb{D}$ and, for each $\beta\in(0,1)$, $(Z,V)\in\mathcal{M}^\beta\times\mathcal{L}^1$.
\end{thm}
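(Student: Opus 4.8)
The plan is to construct the $\mathbb{D}$-solution as a limit of $\mathbb{L}^p$-solutions, $p>1$, obtained by truncating the data. Specifically, I would first regularize the terminal condition and the generator: for each $n\in\mathbb{N}$ set $\xi_n$ to be a truncation of $\xi$ (e.g. $\xi_n=\xi\,\mathbf 1_{|\xi|\le n}$, or $q_n(\xi)$ with $q_n$ a smooth truncation at level $n$) and $f_n(s,\omega,y,z,v)=f(s,\omega,y,z,v)-f(s,\omega,0,0,0)+\theta_n(f(s,\omega,0,0,0))$ where $\theta_n$ truncates the ``free term'' $f(s,0,0,0)$ so that $\mathbb{E}\big[|\xi_n|^2+(\int_0^T|f_n(s,0,0,0)|ds)^2\big]<\infty$. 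Each $f_n$ is still $\kappa$-Lipschitz in $(y,z,v)$ with the same constant, so by Theorem~\ref{T1} (the $L^2$ theory of Tang--Li) there is a unique solution $(Y^n,Z^n,V^n)\in\mathcal{S}^2\times\mathcal{M}^2\times\mathbb{L}^2(\tilde\mu)$ of the BSDE with data $(f_n,\xi_n)$. The goal is then to show $(Y^n,Z^n,V^n)$ is Cauchy in $\mathbb{D}\times\mathcal{M}^\beta\times\mathcal{L}^1$ and that the limit solves the BSDE with data $(f,\xi)$.

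The heart of the argument is a uniform estimate and a difference estimate obtained exactly as in the proof of the uniqueness lemma and Proposition~\ref{P}. For the difference $(\delta Y,\delta Z,\delta V):=(Y^n-Y^m,Z^n-Z^m,V^n-V^m)$, linearizing the Lipschitz generator gives bounded adapted coefficients $a,b,c$, and applying It\^o--Tanaka's formula to $|\delta Y|$ on $[t\wedge\tau_k,\tau_k]$ (with $\tau_k$ the natural localizing stopping times), then taking conditional expectation, kills the martingale parts and yields, after letting $k\to\infty$,
\begin{eqnarray}\nonumber
|\delta Y_t|\le \mathbb{E}\Big[|\xi_n-\xi_m|+\int_t^T\big(|f_n(s,0,0,0)-f_m(s,0,0,0)|+\kappa|\delta Y_s|\big)\,ds\,\Big|\,\mathcal{F}_t\Big].
\end{eqnarray}
A Gronwall-type argument on the conditional expectation (as in \cite{BDHPS}) then bounds $\sup_\tau\mathbb{E}|\delta Y_\tau|$, i.e. $\|\delta Y\|_{\mathbb{D}}$, by $\mathbb{E}\big[|\xi_n-\xi_m|+\int_0^T|f_n(s,0,0,0)-f_m(s,0,0,0)|ds\big]$, which tends to $0$ as $n,m\to\infty$ by dominated convergence using (A1) and (A2)(i). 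Hence $Y^n$ converges in $\mathbb{D}$ to some c\`adl\`ag $Y$ of class $\mathbb{D}$. For $(Z^n,V^n)$, I would run the localized It\^o computation of Lemma~\ref{L} on the difference to control $\mathbb{E}\big[(\int_0^T|\delta Z_s|^2ds)^{\beta/2}+\int_0^T\int_U|\delta V_s(e)|^\beta\lambda(de)ds\big]$ in terms of $\|\delta Y\|_{\mathcal{S}^\beta}$-type quantities together with the data differences; since $\beta\in(0,1)$ one may use the elementary inequality $\mathbb{E}[\sup_t|\delta Y_t|^\beta]\le (\|\delta Y\|_\mathbb{D})^{\beta}$-flavoured bounds via a stopping-time/distribution-function argument (the same device Briand et al. use to pass from $\mathbb{D}$ to $\mathcal{S}^\beta$), giving that $(Z^n)$ is Cauchy in $\mathcal{M}^\beta$ for every $\beta\in(0,1)$ and $(V^n)$ is Cauchy in $\mathcal{L}^1$.

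Having the limit $(Y,Z,V)$ with $Y\in\mathbb{D}$, $Z\in\mathcal{M}^\beta$ for all $\beta<1$, $V\in\mathcal{L}^1$, the final step is to pass to the limit in the equation $Y^n_t=\xi_n+\int_t^T f_n(s,Y^n_s,Z^n_s,V^n_s)ds-\int_t^T Z^n_s dB_s-\int_t^T\int_U V^n_s(e)\tilde\mu(ds,de)$. The stochastic integral terms converge (along a subsequence, uniformly in $t$ in probability) because $\delta Z\to 0$ in $\mathcal{M}^\beta$ and $\delta V\to 0$ in $\mathcal{L}^1$, invoking Lemma~\ref{Y1} for the Poisson part; the Lebesgue integral of the generator converges by the Lipschitz bound $|f_n(s,Y^n,Z^n,V^n)-f(s,Y,Z,V)|\le |f_n(s,0,0,0)-f(s,0,0,0)|+\kappa(|\delta Y_s|+|\delta Z_s|+\|\delta V_s\|)$ plus the fact that $f(s,0,0,0)$ is $d\mathbb{P}\otimes dt$-integrable, so one gets $L^1(d\mathbb{P}\otimes dt)$-convergence of the drifts. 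This identifies $(Y,Z,V)$ as a solution of (\ref{s2}), and uniqueness is already known from the uniqueness lemma. The main obstacle I anticipate is the $\mathcal{M}^\beta$/$\mathcal{L}^1$ convergence of $(Z^n,V^n)$: unlike the $Y$-component, these do not live in $\mathbb{D}$ and one only has an a priori control through the $Y$-estimate, so getting the right dependence of $\|\delta Z\|_{\mathcal{M}^\beta}$, $\|\delta V\|_{\mathcal{L}^1}$ on the data differences (with the exponent $\beta<1$ and the Poisson-integral subtleties handled by Lemma~\ref{Y1} and inequality (\ref{Y2})) is the delicate part, and is exactly where assumption (A2)(iv) with $\alpha<1$ enters, as it did in the uniqueness proof.
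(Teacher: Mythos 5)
Your scheme has a genuine gap at its central estimate. When the generator depends on $(z,v)$, the inequality you display,
\begin{equation*}
|\delta Y_t|\le \mathbb{E}\Big[|\xi_n-\xi_m|+\int_t^T\big(|f_n(s,0,0,0)-f_m(s,0,0,0)|+\kappa|\delta Y_s|\big)\,ds\,\Big|\,\mathcal{F}_t\Big],
\end{equation*}
does not follow from It\^o--Tanaka plus conditional expectation: the linearization produces drift terms $b_s\,\delta Z_s\,ds$ and $c_s\int_U \delta V_s(e)\lambda(de)\,ds$ which are \emph{not} martingale increments and therefore survive the conditioning. The correct bound carries $\kappa(|\delta Z_s|+\|\delta V_s\|)$ inside the time integral, and your Gronwall argument then no longer closes: you would need $\mathbb{E}\int_0^T(|\delta Z_s|+\|\delta V_s\|)\,ds\to 0$, which is precisely what you are trying to prove. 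In the $L^p$, $p>1$, setting one can sometimes absorb $b_s\,\delta Z_s$ by a Girsanov change of measure, but in the class-$\mathbb{D}$/$L^1$ setting that device costs moments one does not have; this is exactly why assumption (A2)(iv) (sublinear growth in $(z,v)$ with exponent $\alpha<1$) is imposed, and your proposal never actually deploys it beyond a closing remark.

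The paper avoids this circularity with a two-stage argument that your single truncation scheme collapses into one. First it proves the theorem when $f$ is independent of $(z,v)$; there your conditional-expectation estimate is valid verbatim, the truncated data give an $L^2$ solution by Theorem \ref{T1}, $(Y^n)$ is Cauchy for $\|\cdot\|_{\mathbb{D}}$, and Lemma \ref{L} upgrades this to Cauchyness of $(Z^n,V^n)$ in $\mathcal{M}^\beta\times\mathcal{L}^1$. Then, for the general generator, it runs a Picard iteration freezing $(z,v)$: $Y^{n+1}$ solves the BSDE with generator $f(s,\cdot,Z^n_s,V^n_s)$, which is $(z,v)$-free and hence solvable by the first stage. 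Assumption (A2)(iv) guarantees that the increment $\int_t^T(g_s+|Y^n_s|+|Z^n_s|+\dots)^\alpha ds$ lies in $\mathbb{L}^q$ for some $q>1$ with $\alpha q<1$, so $Y^{n+1}-Y^n\in\mathcal{S}^q$; the $L^q$ a priori estimates (Lemma \ref{L} and Proposition \ref{P}) then yield a contraction in $\mathcal{S}^q\times\mathcal{M}^q\times\mathcal{L}$ for $T$ small, and the general case follows by subdividing $[0,T]$. To repair your proposal you would have to reproduce this structure (or find another way to handle the $(z,v)$-drift, e.g.\ a measure change you cannot justify here); as written, the $\mathbb{D}$-convergence of $Y^n$ is not established.
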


Before giving the proof of this result, we study the case where the generator is independent of the variables $z$ and $v$.\\

\begin{prop} Let assumptions (A1) and (A2) on $(f,\xi)$ hold and let us suppose that $f$ does not depend on $z$ and $v$. Then the associated BSDE (\ref{s2}) has a solution $(Y,Z,V)$ such that $Y$ belongs to class $\mathbb{D}$ and, for each $\beta\in(0,1)$, $(Z,V)\in\mathcal{M}^\beta\times\mathcal{L}^1$.
\end{prop}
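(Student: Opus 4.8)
The plan is to run the approximation scheme of Briand--Delyon--Hu--Pardoux--Stoica \cite{BDHPS} in the present jump framework: truncate the data so that they lie in $\mathbb{L}^2$, solve the truncated equations by Theorem~\ref{T1}, show that the three components of the approximating solutions are Cauchy in $\mathbb{D}$, in $\mathcal{M}^\beta$ and in $\mathcal{L}^1$ respectively, and pass to the limit. Concretely, for $n\ge 1$ I set $\xi^n:=\xi\,\mathbf{1}_{\{|\xi|\le n\}}$ and, writing $q_n(x):=(-n)\vee x\wedge n$, $f^n(t,\omega,y):=f(t,\omega,y,0,0)-f(t,\omega,0,0,0)+q_n(f(t,\omega,0,0,0))$ (recall that here $f$ does not depend on $z,v$). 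Then $f^n$ is $\kappa$-Lipschitz in $y$ and $|f^n(t,0)|\le n$, so $(\xi^n,f^n)$ satisfies (A2)(ii) and (A3) with $p=2$, and Theorem~\ref{T1} furnishes a unique $(Y^n,Z^n,V^n)\in\mathcal{S}^2\times\mathcal{M}^2\times\mathbb{L}^2(\tilde\mu)$ solving the BSDE with data $(\xi^n,f^n)$; in particular $Y^n\in\mathbb{D}$, since $\mathcal{S}^2\subset\mathbb{D}$.

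\textbf{Estimates for $Y^n-Y^m$.} Fix $n\ge m$ and put $R_m:=|\xi|\,\mathbf{1}_{\{|\xi|>m\}}+\int_0^T|f(s,0,0,0)|\,\mathbf{1}_{\{|f(s,0,0,0)|>m\}}\,ds$, so that $\mathbb{E}[R_m]\to 0$ as $m\to\infty$ by (A1), (A2)(i) and dominated convergence. Applying the It\^o--Tanaka formula to $|Y^n-Y^m|$ on $[t,T]$ (no localization is needed, all processes lying in the relevant $\mathbb{L}^2$ spaces), bounding $|f^n(s,Y^n_s)-f^m(s,Y^m_s)|\le\kappa|Y^n_s-Y^m_s|+|f(s,0,0,0)|\mathbf{1}_{\{|f(s,0,0,0)|>m\}}$, dropping the local time, taking conditional expectation to kill the two martingale terms, and running a backward Gronwall argument, I obtain $|Y^n_t-Y^m_t|\le e^{\kappa T}\,\mathbb{E}[R_m\mid\mathcal{F}_t]$ for all $t\le T$. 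Optional sampling then gives $\|Y^n-Y^m\|_{\mathbb{D}}\le e^{\kappa T}\mathbb{E}[R_m]$, and Doob's weak-type maximal inequality applied to the non-negative martingale $(\mathbb{E}[R_m\mid\mathcal{F}_t])_{t\le T}$ gives $\mathbb{E}[\sup_{t\le T}|Y^n_t-Y^m_t|^\beta]\le\frac{e^{\beta\kappa T}}{1-\beta}\mathbb{E}[R_m]^\beta$ for every $\beta\in(0,1)$. Hence $(Y^n)$ is Cauchy in $\mathbb{D}$ and in each $\mathcal{S}^\beta$; by completeness of the space of c\`adl\`ag class-$\mathbb{D}$ processes, its limit $Y$ is c\`adl\`ag, lies in $\mathbb{D}$, and $\|Y^n-Y\|_{\mathcal{S}^\beta}\to 0$ for every $\beta\in(0,1)$.

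\textbf{Estimates for $Z^n-Z^m$ and $V^n-V^m$.} The triple $(Y^n-Y^m,Z^n-Z^m,V^n-V^m)$ solves a BSDE with a $\kappa$-Lipschitz-in-$y$ generator whose value at $y=0$ is $q_n(f(\cdot,0,0,0))-q_m(f(\cdot,0,0,0))$. I would apply It\^o's formula to $|Y^n-Y^m|^2$, localize exactly as in Lemma~\ref{L}, raise the resulting identity to the power $\beta/2\le 1/2$ and use the sub-additivity $(a+b)^{r}\le a^{r}+b^{r}$ for $r\le 1$ in place of the Young step that forced $p>1$ in Lemma~\ref{L}, estimate the stochastic-integral terms via the Burkholder--Davis--Gundy inequality (together with Lemma~\ref{Y1} and \eqref{Y2} for the Poisson part), absorb the $\int_0^T|Z_s|^2\,ds$ and $\int_0^T\!\int_U|V_s(e)|^2\,\mu(ds,de)$ contributions, and finally let the localization index tend to infinity by Fatou, obtaining a constant $C_{\beta,\kappa}$ independent of $n,m$ for which $\|Z^n-Z^m\|_{\mathcal{M}^\beta}$ and $\|V^n-V^m\|_{\mathcal{L}^1}$ are bounded by $C_{\beta,\kappa}$ times a combination of $\mathbb{E}[\sup_{t\le T}|Y^n_t-Y^m_t|^\beta]$, $\mathbb{E}[R_m]$ and powers of these; by the previous step this goes to $0$. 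Thus $(Z^n)$ is Cauchy in $\mathcal{M}^\beta$ for every $\beta\in(0,1)$ and $(V^n)$ is Cauchy in $\mathcal{L}^1$, with limits $Z$ (so $Z\in\mathcal{M}^0$) and $V$. This step is the main obstacle: because the data are only integrable one cannot work in $\mathcal{S}^p$ or $\mathcal{M}^p$ with $p>1$, so the localization of Lemma~\ref{L} must be combined with the sub-additivity of $x\mapsto x^{\beta/2}$ and with the $\mathcal{S}^\beta$-bound on $Y^n-Y^m$; getting the $V$-component into $\mathcal{L}^1$, rather than merely into $\mathcal{L}^\beta$, is where the argument is most delicate.

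\textbf{Passage to the limit.} Along a subsequence, $\int_0^\cdot Z^n_s\,dB_s$ and $\int_0^\cdot\!\int_U V^n_s(e)\,\tilde\mu(ds,de)$ converge uniformly on $[0,T]$ in probability (by the $\mathcal{M}^\beta$- and $\mathcal{L}^1$-convergence and Lemma~\ref{Y1}); moreover $\xi^n\to\xi$ in $\mathbb{L}^1$, and $\int_0^T f^n(s,Y^n_s)\,ds\to\int_0^T f(s,Y_s,0,0)\,ds$ in $\mathbb{L}^1$, the latter because $|f^n(s,Y^n_s)-f(s,Y_s,0,0)|\le\kappa|Y^n_s-Y_s|+|q_n(f(s,0,0,0))-f(s,0,0,0)|$, $\|Y^n-Y\|_{\mathbb{D}}\to0$ and dominated convergence. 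Passing to the limit in $Y^n_t=\xi^n+\int_t^T f^n(s,Y^n_s)\,ds-\int_t^T Z^n_s\,dB_s-\int_t^T\!\int_U V^n_s(e)\,\tilde\mu(ds,de)$ shows that $(Y,Z,V)$ solves \eqref{s2} with $Y\in\mathbb{D}$ and $(Z,V)\in\mathcal{M}^\beta\times\mathcal{L}^1$ for each $\beta\in(0,1)$, which is the assertion.
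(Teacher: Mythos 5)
Your proposal follows essentially the same route as the paper's proof: truncate $(\xi,f(\cdot,0))$, solve the truncated equations in $\mathcal{S}^2\times\mathcal{M}^2\times\mathcal{L}^2$, get the $\mathbb{D}$-Cauchy property of $(Y^n)$ from the It\^o--Tanaka formula plus conditional expectation, control the increments of $(Z^n,V^n)$ by the a priori estimate of Lemma \ref{L} with exponent $\beta\in(0,1)$, and pass to the limit. You are in fact more explicit than the paper about the two delicate points --- the Gronwall factor in the $\mathbb{D}$-estimate and the adaptation of the Lemma \ref{L} argument to exponents below one (including the $\mathcal{L}^1$ versus $\mathcal{L}^\beta$ control of the $V$-component) --- which the paper passes over by directly invoking Lemma \ref{L}.
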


\proof Let us set for each integer $n\geq 1$, $\xi_n=q_n(\xi)$ and $f_n(t,y)=f(t,y)-f(t,0)+q_n(f(t,0))$ as in the proof of Theorem \ref{T2}. It follows from this result that the BSDE associated to the couple $(\xi_n,f_n)$ has a unique solution in $\mathcal{S}^2\times\mathcal{M}^2\times\mathcal{L}^2 $.\\

Using It\^o-Tanaka formula as in the proof of the uniqueness result we have 

 $$\begin{array}{ll}
 & |Y_{t}^{n+i}-Y^n_{t}|\leq   |\xi_{n+i}-\xi_n|+ \int_{t}^{T}  (f_{n+i}(s,Y_s^{n+i})-f_n(s,Y_s^n))ds+ \int_{t}^{T}  (Z_{s}^{n+i}-Z^n_{s})dB_s\\\\
 
 & -\int_{t }^{T} \int_{\mathcal{U}} (V_s^{n+i}(e)-V^{n}_s(e))\tilde{\mu}(ds,de).
 \end{array}$$

Now taking conditional expectation with respect to $\mathcal{F}_{t}$ on both hand-sides and taking into account that the two last terms are $\mathcal{F}_{t }$- martingales we obtain, 

 $$\begin{array}{ll}
 & |Y_{t}^{n+i}-Y^n_{t}|\leq \mathbb{E}[  |\xi_{n+i}-\xi_n|+ \int_{t}^{T} |f_{n+i}(s,Y_s^{n+i})-f_n(s,Y_s^n)|ds|\mathcal{F}_t]. 
 
 \end{array}$$
 
 We deduce that
 
 $$\begin{array}{ll}
 |Y_{t}^{n+i}-Y^n_{t}|\leq  \mathbb{E}[|\xi|\mathbf{1}_{\xi>n}+ \int_{t}^{T}  |f(s,0)|\mathbf{1}_{f(s,0)>n}ds|\mathcal{F}_{t }]. 
 \end{array}$$

Therefore 

 $$\begin{array}{ll}
 \Vert Y^{n+i}-Y^n\Vert_{\mathbb{D}}\leq  \mathbb{E}[|\xi|\mathbf{1}_{\xi>n}+ \int_{t}^{T}   |f(s,0)|\mathbf{1}_{f(s,0)>n}ds]. 
 \end{array}$$

So $(Y^n)$ is a Cauchy sequence for the norm $\Vert.\Vert_\mathbb{D}$ converges to the progressive measurable process limit $Y$ which belongs to the class $\mathbb{D}$.\\

Let $(Y^{n+i}-Y^n,Z^{n+i}-Z^n,V^{n+i}-V^n)$ be the solution of the following BSDE:

 $$\begin{array}{ll}
  Y^{n+i}_t-Y^n_t=\xi_{n+i}-\xi_n+\int_t^T (f_{n+i}(s,Y_s^{n+i})-f_n(s,Y_s^n))ds-\int_t^t (Z^{n+i}_s-Z^n_s) dB_s\\
  \qquad \qquad -\int_t^T \int_U (V^{n+i}_s(e)-V^n_s(e)) \tilde{\mu}(ds,de).
 \end{array}$$

The random function $f_{n+i}(s,Y_s^{n+i})-f_n(s,Y_s^n)$ verifies the Lipschitz property as $f$ then by Lemma \ref{L} we deduce that for $\beta\in(0,1)$,

$$\begin{array}{ll}
   \mathbb{E}[(\int_0^T(|Z^{n+i}_s-Z^n_s|^2+\Vert V^{n+i}_s-V^n_s\Vert^2) ds)^{\frac{\beta}{2}}]\leq C_{\beta,\kappa} \mathbb{E}[\sup_{t}|Y^{n+i}_t-Y^n_t|^\beta+(\int_0^T |f(s,0)| \mathbf{1}_{f(s,0)>n}ds)^\beta].

\end{array}$$

 Hence both $(Z^n)$ and $(V^n)$ are Cauchy sequences, for each $\beta\in(0,1)$, in the spaces $\mathcal{M}^\beta$ and $\mathcal{L}$ which converge to measurable processes $Z$ and $V$.\\
 
 So we get that $(Y^n,Z^n,V^n)$ solution of the following BSDE
 
 \begin{eqnarray}
  Y_t^n=\xi_n+\int_{t}^{T}f_n(s,Y_s^n)ds -\int_{t}^{T}Z_s^n dB_s-\int_t^T \int_U V_s^n(e) \tilde{\mu}(ds,de), t\leq T;
 \end{eqnarray}
 
 Since $\int_{0}^{t}Z_s^n dB_s$ coverges to $\int_{0}^{t}Z_s dB_s$, also $\int_0^t \int_U V_s^n(e) \tilde{\mu}(ds,de)$ converges to $\int_0^t \int_U V_s(e) \tilde{\mu}(ds,de)$ and since the map $y\mapsto f(t,y)$ is continuous, by taking the limit we check easily that the limit $(Y,Z,V)$ solves the following desired BSDE
  $$\begin{array}{ll}
  Y_t=\xi+\int_{t}^{T}f(s,Y_s)ds -\int_{t}^{T}Z_s dB_s-\int_t^T \int_U V_s(e) \tilde{\mu}(ds,de). \ \square
 \end{array} $$

Now we can prove our main existence result.\\

 \proof \textbf{of Theorem \ref{T}}  We will finally complete the proof of the existence part. To this end, we consider a Picard's iteration. Let us set $(Y^0,Z^0,V^0)=(0,0,0)$ and define recursively, for each $n\geq 0$,
 
$$\begin{array}{ll}
   Y_t^{n+1}=\xi+\int_{t}^{T}f(s,Y_s^{n+1},Z_s^n,V_s^n)ds -\int_{t}^{T}Z_s^{n+1} dB_s-\int_t^T \int_U V_s^{n+1}(e) \tilde{\mu}(ds,de), 0\leq t\leq T.
\end{array}$$ 

For $n\geq 1$, following the same arguments as in the proof of uniqness, we obtain that

 $$\begin{array}{ll}
 |Y_t^{n+1}-Y_t^n|\leq C \mathbb{E}[ \int_{t}^{T} sgn(Y_{s^-}^n-Y_{s^-}^{n-1})(g_s+|Y_s^n|+|Y_s^{n-1}|+|Z_s^n|+|Z_s^{n-1}|+\Vert V_s^n \Vert+\Vert V_s^{n-1} \Vert)^\alpha ds|\mathcal{F}_{t }]. 
 
\end{array}$$

$Z^n$ and $Z^{n-1}$ belong to $\mathcal{M}^\beta$ for each $\beta\in(0,1]$, $Y^n$ and $Y^{n-1}$ belong to class $\mathbb{D}$, $V^n$ and $V^{n-1}$ belong to $\mathcal{L}$ and $(g_t)_{t\in[0,T]}$ is integrable. Hence the quantity $$\int_{t}^{T} sgn(Y_{s^-}^n-Y_{s^-}^{n-1})(g_s+|Y_s^n|+|Y_s^{n-1}|+|Z_s^n|+|Z_s^{n-1}|+\Vert V_s^n \Vert+\Vert V_s^{n-1} \Vert)^\alpha ds$$

belongs to the space $\mathbb{L}^q$ such that $\alpha q<1$. Let us fix $q\in(1,2)$ such that $\alpha p<1$. Then for all $n\geq 1$, $y^n=Y^{n+1}-Y^n$ belongs to $\mathcal{S}^q$. Let us set $z^n=Z^{n+1}-Z^n$ and $v^n=V^{n+1}-V^n$. The triple $(y^n,z^n,v^n)$ is a solution of the following BSDE 

 $$\begin{array}{ll}
   y_t^n= \int_{t}^{T}f_n(s,y_s^n )ds -\int_{t}^{T}z_s^n dB_s-\int_t^T \int_U v_s^n(e) \tilde{\mu}(ds,de), t\leq T,
 
\end{array}$$

where the generator $f_n(s,y_s^n)=f(s,Y^{n+1},Z_s^n,V_s^n)-f(s,Y_s^n,Z_s^{n-1},V_s^{n-1})$. Since $f$ assumed to satisfy (A2), $f_n$ verifies it too.\\

From Lemma \ref{L} we have that $z^n$ belongs to $\mathcal{M}^q$ since $y^n$ is in $\mathcal{S}^q$ and by Proposition \ref{P} we obtain that there exists a constant $C_{q,\kappa}$ such that 
 
 $$\begin{array}{ll}
   \mathbb{E}[\sup_t |y_t^n|^q+(\int_0^T (|z_s^n|^2+\Vert v_s^n\Vert^2 )ds)^{\frac{q}{2}}]\leq C_{q,\kappa} \mathbb{E}[ (\int_0^T  |f(s,Y_s^n,Z_s^n,V_s^n)-f(s,Y_s^n,Z_s^{n-1},V_s^{n-1})| ds)^q].

\end{array}$$

For $n\geq 2$, by the Lipschitz property on $f$, we get
 
  $$\begin{array}{ll}
   \mathbb{E}[\sup_t |y_t^n|^q+(\int_0^T (|z_s^n|^2+\Vert v_s^n\Vert^2 )ds)^{\frac{q}{2}}]\leq \kappa\ C_{q,\kappa}  \mathbb{E}[ (\int_0^T  |z_s^{n-1}|+\Vert v_s^{n-1}\Vert ds)^q],
 
\end{array}$$

applying H\"{o}lder's inequality, we obtain

$$\begin{array}{ll}
   \mathbb{E}[\sup_t |y_t^n|^q+(\int_0^T (|z_s^n|^2+\Vert v_s^n\Vert^2 )ds)^{\frac{q}{2}}]\leq \kappa\ C_{q,\kappa} T^{1-\frac{q}{2}} \mathbb{E}[ (\int_0^T  (|z_s^{n-1}|+\Vert v_s^{n-1}\Vert)^2 ds)^{\frac{q}{2}}].
 
\end{array}$$

Hence for  $n\geq 2$, we have

$$\begin{array}{ll}
   \mathbb{E}[\sup_t |y_t^n|^q+(\int_0^T (|z_s^n|^2+\Vert v_s^n\Vert^2 )ds)^{\frac{q}{2}}]\leq (\kappa\ C_{q,\kappa} T^{1-\frac{q}{2}})^{n-1} \mathbb{E}[\sup_t |y_t^1|^q+(\int_0^T (|z_s^1|^2+\Vert v_s^1\Vert^2 )ds)^{\frac{q}{2}}].
 
\end{array}$$

Let us first assume, for a sufficiently small $T$, that $\kappa\ C_{q,\kappa} T^{1-\frac{q}{2}}<1$. Then the term of the right-hand side of the last inequality is finite, we deduce that $(Y^n-Y^1,Z^n-Z^1,V^n-V^1)$ converges to $(U,V,W)$ in the space $\mathcal{S}^q\times\mathcal{M}^q\times\mathcal{L}$ therefore the quantity $\lbrace\mathbb{E}[\sup_t |y_t^1|^q+(\int_0^T (|z_s^1|^2+\Vert v_s^1\Vert^2 )ds)^{\frac{q}{2}}]\rbrace$ is finite.\\

 Therefore $(Y^n,Z^n,V^n)$ converges to $(Y=U+Y^1,Z=V+Z^1,V=W+V^1)$ in the space $\mathcal{S}^\beta\times\mathcal{M}^\beta\times\mathcal{L}$ for each $\beta\in(0,1]$ since $(Y^1,Z^1,V^1)$ belongs to it. Also we deduce that $Y^n$ converges to $Y$ for the norm $\Vert.\Vert_{\mathbb{D}}$ since $Y^1$ belongs to class $\mathbb{D}$ and the convergence in $\mathcal{S}^q$ with $q>1$ is stronger than the convergence in $\Vert.\Vert_{\mathbb{D}}$-norm.\\
 
 We conclude, by taking the limit in the following equtaion satisfied by $(Y^n,Z^n,V^n)$ as follows,
 
 $$\begin{array}{ll}
   Y_t^{n}=\xi+\int_{t}^{T}f(s,Y_s^{n},Z_s^{n-1},V_s^{n-1})ds -\int_{t}^{T}Z_s^{n} dB_s-\int_t^T \int_U V_s^{n}(e) \tilde{\mu}(ds,de),  
\end{array}$$ 

that the triple $(Y,Z,V)$ which belongs to the space $ \mathbb{D}\times\mathcal{M}^\beta\times\mathcal{L}$ for each $\beta\in(0,1]$ solves our desired BSDE,

 $$\begin{array}{ll}
   Y_t=\xi+\int_{t}^{T}f(s,Y_s ,Z_s ,V_s )ds -\int_{t}^{T}Z_s  dB_s-\int_t^T \int_U V_s (e) \tilde{\mu}(ds,de), 0\leq t\leq T.\\
\end{array}$$  

For the general case, it suffices to subdivide the interval time $[0,T]$ into a finite number of small intervals, and using standard arguments, we can show the existence of a solution $(Y,Z,V)$ of BSDE (\ref{s2}) on the whole interval $[0,T]$. $\square$


\begin{thebibliography}{99}

 \bibitem{BDHPS}
\newblock Ph. Briand, B. Delyon, Y. Hu, E. Pardoux and L. Stoica,
\newblock $L^p$ solutions of backward stochastic differential equations,
\newblock \emph{Stochastic Process. Appl.}, \textbf{108} (2003), 109--129.

 \bibitem{DM1} 
\newblock C. Dellacherie and P. A. Meyer,
 \newblock \emph{Probabilités et Potentiel I-IV},
 \newblock  Hermann, Paris, 1975.

\bibitem{DM2} 
\newblock C. Dellacherie and P. A. Meyer,    
\newblock \emph{Probabilités et Potentiel V-VIII}
\newblock  Hermann, Paris, 1980

\bibitem{KP} 
\newblock T. Kruse and A. Popier,
\newblock BSDEs with jumps in a general filtration,
 \newblock \emph{Stoch. Int. J. Probab. Stoch. Proces}, \textbf{88}, (2016), 491--539.
 
\bibitem{LT} 
\newblock X. Li and S. Tang,
\newblock Necessary condition for optimal control of stochastic systems with random jumps,
 \newblock \emph{SIAM JCO}, \textbf{32} (2), (1994), 1447--1475.
 
 \bibitem{P} 
\newblock E. Pardoux,
\newblock   Generalized discontinuous backward stochastic differential equations. In: Backward stochastic differential equations,
 \newblock \emph{Pitman res. Notes Math. Ser.}, \textbf{364}, (1997).
 
\bibitem{PP} 
\newblock E. Pardoux and S. Peng,
\newblock   Adapted solution of a backward stochastic differential equations,
 \newblock \emph{Systems and Control Letters}, \textbf{14}, (1990), 55--61.

 \bibitem{PR} 
\newblock E. P. Protter, 
\newblock \emph{Stochastic Integration and Differential Equations}, 
\newblock 2$^{nd}$ edition, Springer, New York, 2000.
	
	\bibitem{QS} 
\newblock M-C. Quenez and A. Sulem,
\newblock   BSDEs with jumps, optimization and applications to dynamic risk measures.
 \newblock \emph{Stoch. Process. Appl}, \textbf{123}, (2013), 3328--3357.
 
\bibitem{R} 
\newblock S. Rong,
 \newblock On solutions of backward stochastic differential equations with jumps and applications,
 \newblock \emph{Stochastic Processes and their Applications}, \textbf{66} (1997), 209--236.
 	
\bibitem{RY} 
\newblock D. Revuz and M. Yor,
 \newblock \emph{Continuous martingales and Brownian motion},
 \newblock Springer, New York, 1994.

 
 
 \bibitem{Y} 
\newblock S. Yao,
 \newblock $L^p$ Solutions of Backward Stochastic Differential Equations with jumps (July 7, 2016),
 \newblock Available at SSRN: http://ssrn.com/abstract=2806567 or http://dx.doi.org/10.2139/ssrn.2806567.
 
  \bibitem{YS} 
\newblock J. Yin and R. Situ,
 \newblock On solutions of forward-backward stochastic differential equations with Poisson jumps,
 \newblock \emph{Stochastic Anal.Appl.}, \textbf{21(6)} (2003), 1419--1448.
\end{thebibliography}
\end{document}